\newtheorem{theorem}{Theorem}[section]
\newtheorem{lemma}[theorem]{Lemma}
\theoremstyle{definition}
\theoremstyle{remark}
\newtheorem{remark}[theorem]{Remark}
\theoremstyle{example}
\theoremstyle{note}
\numberwithin{equation}{section}
\DeclareMathOperator{\GL}{GL}
\DeclareMathOperator{\SL}{SL}
\DeclareMathOperator{\ind}{ind}
\DeclareMathOperator{\Sp}{Sp}
\DeclareMathOperator{\Gsp}{Gsp}
\DeclareMathOperator{\Int}{Int}
\begin{document}
\title{Self-dual representations of $\Sp(4,F)$}
\author{Kumar Balasubramanian}
\thanks{Supported by DST-SERB Grant: YSS/2014/000806}

\maketitle
\begin{abstract}
Let $F$ be a non-Archimedean local field of characteristic $0$ and $G=\Sp(4,F)$. Let $(\pi,W)$ be an irreducible smooth self-dual representation $G$. The space $W$ of $\pi$ admits a non-degenerate $G$-invariant bilinear form $(\,,\,)$ which is unique up to scaling. The form $(\,,\,)$ is easily seen to be symmetric or skew-symmetric and we set $\varepsilon({\pi})=\pm 1$ accordingly. In this paper, we show that $\varepsilon{(\pi)}=1$ when $\pi$ is an Iwahori spherical representation of $G$. \\
\end{abstract}

\section{Introduction}

Let $G$ be a group and $(\pi,V)$ be an irreducible complex representation of $G$. Suppose that $\pi\simeq \pi^{\vee}$ ($\pi^{\vee}$ is the dual or contragredient representation). In the presence of Schur's lemma, it is easy to see that there exists a non-degenerate $G$-invariant bilinear form on $V$ which is unique up to scalars, and consequently is either symmetric or skew-symmetric. Accordingly, we set
\begin{equation*}
\varepsilon{(\pi)} =
\begin{cases}
\;\;\;1 & \text{if the form is symmetric},\\
-1 & \text{if the form is skew-symmetric},
\end{cases}
\end{equation*}
which we call the sign of $\pi$. In this paper, we study this sign for a certain class of representations of $\Sp(4,F)$. To be more precise, we show that $\varepsilon(\pi)=1$, when $\pi$ is an Iwahori-spherical representation of $G=\Sp(4,F)$. \\

The sign $\varepsilon{(\pi)}$ has been well studied for connected compact Lie groups and certain classes of finite groups of Lie type. If $G$ is a connected compact Lie group, it is known that the sign can be computed using the dominant weight attached to the representation $\pi$ (see \cite{BroTom} pg. 261-264). For finite groups of Lie type, computing the sign involves tedious conjugacy class computations. We refer to the following paper of Gow (\cite{Gow[2]}) where the sign is studied for such groups. In \cite{Pra[1]}, Prasad introduced an idea to compute the sign for a certain class of representations of finite groups of Lie type. He used this idea to determine the sign for many classical groups of Lie type, avoiding difficult computations. In recent times, there has been a lot of progress in studying these signs in the setting of reductive p-adic groups. In \cite{Pra[2]}, Prasad extended the results of \cite{Pra[1]} to the case of reductive p-adic groups and computed the sign of certain classical groups. The disadvantage of his method is that it works only for representations admitting a Whittaker model. In \cite{RocSpa}, Roche and Spallone discuss the relation between twisted sign (see section 1 in \cite{RocSpa}) and the ordinary sign and describe a way of studying the ordinary sign using the twisted sign. More recently in \cite{PraDin}, Prasad and Ramakrishnan have looked at signs of irreducible self-dual discrete series representations of $\GL_n(D)$, for $D$ a finite dimensional $p$-adic division algebra, and have proved a remarkable formula that relates the signs of these representations and the signs of their Langlands parameters. \\

In this paper, we compute $\varepsilon(\pi)$ for any irreducible smooth self-dual representation of $\Sp(4,F)$ with non-trivial vectors fixed under an Iwahori subgroup $I$. To be more precise, we prove the following

\begin{theorem}[Main Theorem] Let $G=\Sp(4,F)$ and $(\pi, W)$ be an irreducible smooth self-dual representation of $G$ with non-trivial vectors fixed under an Iwahori subgroup $I$. Then $\varepsilon(\pi)=1$.
\end{theorem}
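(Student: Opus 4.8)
The plan is to reduce the computation to a finite list of representations and to treat its generic and non-generic members separately. First I would record a structural simplification special to $\Sp(4,F)$: its root system is of type $C_2$, for which the longest Weyl element $w_0$ acts as $-1$ on the cocharacter lattice. Consequently every unramified character $\chi$ of the maximal torus satisfies $w_0\chi=\chi^{-1}$, so the unramified principal series $\Ind_B^G(\chi)$ is isomorphic to its own contragredient; by the Borel--Casselman correspondence every irreducible Iwahori-spherical representation is a subquotient of such a series, and is therefore self-dual. Thus self-duality is automatic in this setting, and the content of the theorem lies entirely in the sign. I would then invoke the classification of the irreducible Iwahori-spherical representations of $\Sp(4,F)$ (as worked out by Sally and Tadi\'c), which presents them as a finite collection of families indexed by the unramified characters $\chi$ together with the combinatorial data describing the reducibility points of $\Ind_B^G(\chi)$.

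For the generic members of this list the sign is handled by Prasad's method: by \cite{Pra[2]} a self-dual generic representation of $\Sp(2n,F)$ is orthogonal, reflecting the fact that the Langlands dual group is the odd orthogonal group $\SO(2n+1,\mathbb{C})$, so that $\varepsilon(\pi)=1$ for every generic constituent. The remaining work is therefore to compute $\varepsilon(\pi)$ for the non-generic Iwahori-spherical representations: the various Langlands quotients and the non-tempered or non-generic subquotients occurring at reducibility points, together with one-dimensional constituents such as the trivial representation. The trivial representation is orthogonal by inspection; for the genuinely non-generic constituents I would argue as follows.

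A convenient device is to pass to Iwahori-fixed vectors. Writing $e_I$ for the idempotent attached to $I$, the $G$-invariance of $(\,,\,)$ shows that $e_I$ is self-adjoint, that $W=W^{I}\perp(1-e_I)W$, and hence that $(\,,\,)$ restricts to a non-degenerate form on the finite-dimensional space $W^{I}$ of the same symmetry type; so $\varepsilon(\pi)$ equals the sign of the induced form on $W^{I}$, which is invariant for the anti-involution $T_w\mapsto T_{w^{-1}}$ of the Iwahori--Hecke algebra. For the non-generic constituents I would realize the self-pairing through the long intertwining operator $A(w_0,\chi)\colon\Ind_B^G(\chi)\to\Ind_B^G(w_0\chi)=\Ind_B^G(\chi^{-1})\cong\Ind_B^G(\chi)^{\vee}$, whose Gindikin--Karpelevich factorization into rank-one operators lets one track the symmetry of the resulting form constituent by constituent.

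I expect the main obstacle to be precisely these non-generic constituents: they lie outside the reach of Prasad's Whittaker-model argument, and the symmetry of the induced pairing must be pinned down by an explicit analysis, either through the rank-one intertwining computations above or by computing the Frobenius--Schur-type sign directly on the small Hecke-algebra module $W^{I}$. The delicate bookkeeping occurs at the reducibility points, where $\Ind_B^G(\chi)$ breaks into several pieces and the long intertwining operator degenerates; once the sign of the self-pairing is shown to remain $+1$ through these degenerations, consistently with its value at the generic regular points, the theorem follows.
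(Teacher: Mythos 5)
Your reduction of the generic case is essentially the paper's: Prasad's theorem is the tool, though you state it too strongly. Prasad proves $\varepsilon(\pi)=1$ \emph{if and only if} $\omega_{\pi}(-1)=1$, not that every self-dual generic representation of $\Sp(2n,F)$ is orthogonal (symplectic generic representations do occur when $\omega_{\pi}(-1)=-1$); what saves you is that $-\Id\in I$, so Iwahori-sphericality forces $\omega_{\pi}(-1)=1$. You should make that step explicit rather than appeal to the shape of the dual group. Your preliminary observation that self-duality is automatic for Iwahori-spherical representations also needs care: $w_{0}$ acting as $-1$ on the torus makes the full principal series self-dual, but duality then only permutes its irreducible constituents and need not fix each one; in any case the theorem takes self-duality as a hypothesis, so nothing is lost.

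The genuine gap is the non-generic case, which is the entire content of the theorem, and your proposal does not prove it --- it describes a computation (Hecke-module signs, or the Gindikin--Karpelevich factorization of $A(w_{0},\chi)$) and then asserts that "the sign remains $+1$ through the degenerations." That assertion is precisely what must be established, and the proposed route has a real obstruction: at a reducibility point the long intertwining operator has zeros and poles, must be normalized, and after normalization induces pairings on the individual subquotients that are \emph{not} obtained by naive specialization from the symmetric form at nearby irreducible points; a priori a non-generic constituent could perfectly well inherit a skew-symmetric pairing, and ruling this out requires an argument you have not supplied. (The reduction to $W^{I}$ via the self-adjoint idempotent $e_{I}$ is correct and harmless, but it only relocates the problem to a Hecke module of type $C_{2}$ without solving it.) The paper closes this gap by an entirely different mechanism: it introduces the twisted sign $\varepsilon_{\theta}$ for $\theta=\Int(w_{0})$, proves $\varepsilon_{\theta}(\pi)=\varepsilon(\pi)\omega_{\pi}(-1)=\varepsilon(\pi)$ by passing to $\Gsp(4,F)$, and then uses the Roche--Spallone reduction (Casselman's pairing plus multiplicity one of the Langlands/discrete-series datum in the Jacquet module) to push the computation down to a tempered or discrete series representation of a proper Levi subgroup, where the Levi is a product of $\GL$'s and $\SL(2,F)$ and the sign is known. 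If you want to keep your intertwining-operator strategy, you must either carry out the normalization and subquotient-by-subquotient sign bookkeeping explicitly for each reducibility configuration in the Sally--Tadi\'c classification, or replace it with a structural reduction of the kind the paper uses.
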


We briefly explain the key ideas of the proof. We first consider the case when $\pi$ is a square integrable representation of $G$. It can be shown that such representations are generic. When $\pi$ is a generic representation of $G$, it is well known from the work of Prasad (proposition 2 in \cite{Pra[1]})) that the sign is determined by $\omega_{\pi}(-1)$, where $\omega_{\pi}$ is the central character of $\pi$. Using this along with the fact that $\pi$ is Iwahori spherical, it follows immediately that $\varepsilon(\pi)=1$. \\

In the case when $\pi$ is not generic, we consider the following two cases:\\
\begin{enumerate}
\item[a)] $\pi$ is not tempered: Here we use the results of Roche and Spallone (\cite{RocSpa}) and reduce the problem to computing the twisted sign (explained later) of a certain tempered representation of a Levi subgroup of $G$. Using the fact that the Levi subgroup of $G$ is of a certain type, allows us to apply some known results on Iwahori spherical representations to compute the sign. \\
\item[b)] $\pi$ is tempered: Here we compute the sign by reducing to the case of studying the twisted sign of a certain discrete series representation of a Levi subgroup of $G$. \\
\end{enumerate}
\vspace{0.3 cm}

The paper is organized as follows. In section~\ref{sign of a rep}, we introduce the notion of twisted and ordinary signs attached to the representation $\pi$. In section~\ref{main results}, we recall some results which we need. In the section \ref{main theorem} we prove the main theorem.

\section{Some preliminaries on signs}\label{sign of a rep}

In this section, we briefly discuss the notion of twisted and ordinary signs associated to representations.\\

Let $F$ be a non-Archimedean local field and $G$ be the group of $F$-points of a connected reductive algebraic group. Let $(\pi,W)$ be a smooth irreducible complex representation of $G$. We write $(\pi^{\vee}, W^{\vee})$ for the smooth dual or contragredient of $(\pi,W)$ and $\langle \, , \, \rangle$ for the canonical non-degenerate $G$-invariant pairing on $W \times W^{\vee}$ (given by evaluation). Let $\theta$ be a continuous automorphism of $G$ of order at most two. Let $(\pi^{\theta},W)$ be the $\theta$-twist of $\pi$ defined by \[\pi^{\theta}(g)w=\pi(\theta(g))w. \]
Suppose that $\pi^{\theta}\simeq \pi^{\vee}$. Let $s: (\pi^{\theta},W)\rightarrow (\pi^{\vee}, W^{\vee})$ be an isomorphism. The map $s$ can be used to define a bilinear form on $W$ as follows:
\begin{displaymath}
(w_{1}, w_{2})= \langle w_{1}, s(w_{2})\rangle, \quad \forall w_{1}, w_{2}\in W.
\end{displaymath}
It is easy to see that $(\,,\,)$ is a non-degenerate form on $W$ that satisfies the following invariance property
\begin{equation}\label{G-inv}
(\pi(g)w_{1}, \pi^{\theta}(g)w_{2})= (w_{1}, w_{2}), \quad \forall w_{1}, w_{2}\in W.\\
\end{equation}
Let $(\, , \,)_{\ast}$ be a new bilinear form on $W$ defined by
\begin{equation*}
(w_{1} , w_{2})_{\ast} = (w_{2}, w_{1})\\
\end{equation*}

Clearly, this form is again non-degenerate and $G$-invariant in the sense of \eqref{G-inv}. It follows from Schur's Lemma that
\begin{equation*}\label{dual}
(w_{1} , w_{2})_{\ast} = c(w_{1}, w_{2})
\end{equation*}
for some non-zero scalar $c$. A simple computation shows that $c\in \{\pm 1\}$. Indeed,
\begin{displaymath}
(w_{1},w_{2}) = (w_{2},w_{1})_{\ast} = c(w_{2},w_{1}) = c(w_{1},w_{2})_{\ast} = c^{2}(w_{1},w_{2}).
\end{displaymath}
We set $c=\varepsilon_{\theta}{(\pi)}$ and call it the twisted sign of $\pi$. It clearly depends only on the equivalence class of $\pi$. If $\theta$ is the trivial automorphism of $G$, we simply write $\varepsilon(\pi)$ instead of $\varepsilon_{1}(\pi)$ and call it the ordinary sign.  In sum, the form $(\,,\,)$ is symmetric or skew-symmetric and the sign $\varepsilon_{\theta}(\pi)$ determines its type.

\subsection{} Let $\theta$ be an automorphism of $G$ of order at most $2$ and suppose that $\pi^{\theta}\simeq \pi^{\vee}$. Consider the automorphism $\theta'$ of $G$ defined by
\[\theta'= \Int(h)\circ \theta\]
for $h\in G$, where $\Int(h)$ denotes the inner automorphism $g\rightarrow hgh^{-1}$ of $G$. In this situation, it is clear that $\pi^{\theta'}\simeq \pi^{\vee}$. Suppose $h\in G$ is chosen such that $\theta'$ has order at most two. It is easy to see that $h\theta(h)$ is a central element and $h\theta(h)=\theta(h)h$. A simple computation shows that
\begin{equation}\label{relation between signs}\varepsilon_{\theta'}(\pi)=\varepsilon_{\theta}(\pi)\omega_{\pi}(\theta(h)h)\end{equation} where $\omega_{\pi}$ is the central character. For specific details of this computation, we refer the reader to $1.2.1$ in \cite{RocSpa}.

\section{Some results we need}\label{main results}

In this section, we recall some results used in the proof of the main result of this paper.

\subsection{Prasad's Theorem}\label{Prasads theorem} Throughout this section we write $F_{q}$ for the finite field with $q$ elements. In \cite{Pra[1]}, Prasad shows that for an irreducible self-dual generic representation of $\Sp(n,F_{q})$, the sign $\varepsilon{(\pi)}$ is determined by the value of the central character $\omega_{\pi}$ of $\pi$. To be more precise, he proves the following

\begin{theorem}[Prasad] $\pi$ be an irreducible self-dual generic representation of $\Sp(n,F_{q})$. $\varepsilon{(\pi)}=1$ if and only if $\omega_{\pi}(-1)=1$.
\end{theorem}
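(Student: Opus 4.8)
The plan is to use the genericity of $\pi$ together with the uniqueness of Whittaker models for $\Sp(n,F_q)$. First I would fix a non-degenerate character $\psi$ of a maximal unipotent subgroup $N$ and realize $\pi$ on its $\psi$-Whittaker model $\mathcal{W}(\pi,\psi)$, so that $G$ acts by right translation and each $W$ satisfies $W(ug)=\psi(u)W(g)$ for $u\in N$. Since the contragredient of a $\psi$-generic representation is $\psi^{-1}$-generic, self-duality gives $\pi$ both a $\psi$- and a $\psi^{-1}$-Whittaker functional, and the canonical pairing between $\mathcal{W}(\pi,\psi)$ and $\mathcal{W}(\pi^{\vee},\psi^{-1})$ is $\langle W,W'\rangle=\sum_{g\in N\backslash G}W(g)W'(g)$, the product transforming trivially under $N$. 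Because the invariant bilinear form on $\pi$ is unique up to scalar, it suffices to write down one explicit non-zero $G$-invariant pairing on $\mathcal{W}(\pi,\psi)$ and read off its symmetry type.

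The main case is when $\psi$ and $\psi^{-1}$ lie in the same $T$-orbit, that is, when $-1$ is a square in $F_q$. Here I would pick a torus element $h\in T$ with $\psi(huh^{-1})=\psi^{-1}(u)$ for all $u\in N$ and set $B(W_{1},W_{2})=\sum_{g\in N\backslash G}W_{1}(g)\,W_{2}(hg)$. The transformation laws of $W_{1},W_{2}$ under $N$ together with $\psi^{h}=\psi^{-1}$ make the summand well defined on $N\backslash G$, a change of variables $g\mapsto gx$ shows that $B$ is invariant under right translation, and irreducibility forces $B$ to be non-degenerate, hence to agree with the intrinsic invariant form up to scalar. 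Swapping the arguments and substituting $g\mapsto h^{-1}g$ produces the factor coming from $h^{-1}=h\cdot h^{-2}$ and the central character, yielding $B(W_{2},W_{1})=\omega_{\pi}(h^{2})\,B(W_{1},W_{2})$. Solving for the simple-root conditions $\alpha_{i}(h)=-1$ forces $h^{2}=-\Id$, the non-trivial central element of $\Sp(n,F_q)$, so $\varepsilon(\pi)=\omega_{\pi}(-1)$, which is exactly the asserted equivalence $\varepsilon(\pi)=1\iff\omega_{\pi}(-1)=1$.

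The step I expect to be the main obstacle is the remaining case, in which $-1$ is a non-square. Then $\psi^{-1}$ lies in a $T$-orbit distinct from that of $\psi$, so no element of $\Sp(n,F_q)$ normalizes $N$ and flips $\psi$, and the explicit form above is unavailable. To treat this uniformly I would instead use the abstract self-duality isomorphism to compare $\mathcal{W}(\pi,\psi)$ with $\mathcal{W}(\pi^{\vee},\psi^{-1})$ directly — equivalently, enlarge to the similitude group $\Gsp(n,F_q)$, whose non-square similitude factor supplies an element conjugating $\psi$ to $\psi^{-1}$ while normalizing $\Sp(n,F_q)$ and $N$. The delicate point is to verify that this element again squares to the central $-\Id$ and that the induced pairing is non-degenerate, so that the symmetry computation reproduces the same value $\omega_{\pi}(-1)$; carrying out this comparison carefully, and confirming that the square of the self-duality element is the central $-\Id$ independently of whether $-1$ is a square, is where the real work lies.
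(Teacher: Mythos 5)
The paper does not actually prove this statement; it is quoted from \cite{Pra[1]} and used as a black box (with a remark that compact approximation of Whittaker models extends it to the $p$-adic case). So your proposal has to be measured against Prasad's published argument, and in its first half it reconstructs that argument correctly: realizing $\pi$ in $\mathcal{W}(\pi,\psi)$, choosing $h\in T$ with $\alpha(h)=-1$ on every simple root, forming $B(W_{1},W_{2})=\sum_{N\backslash G}W_{1}(g)W_{2}(hg)$, and reading off $B(W_{2},W_{1})=\omega_{\pi}(h^{2})B(W_{1},W_{2})$ with $h^{2}=-\Id$ is exactly Prasad's Proposition 2 specialized to $\Sp(2n)$. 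Note, though, that the long simple root forces $h_{n}^{2}=-1$, so this $h$ exists in $\Sp(2n,F_{q})$ only when $-1$ is a square in $F_{q}$; you correctly isolate the complementary case as the real issue, but you do not close it, so the proof as written is incomplete.

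Moreover, the mechanism you anticipate for the remaining case would fail. The natural elements of $\Gsp(2n,F_{q})$ with non-square similitude that normalize $N$ and carry $\psi$ to $\psi^{-1}$ --- for instance $g_{0}=\mathrm{diag}(1,\dots,1,-1,\dots,-1)$ with multiplier $\lambda_{g_{0}}=-1$, or more generally $t\cdot g_{0}$ with $t$ scalar --- square to $+\Id$ (or to $t^{2}\Id$), \emph{not} to the central $-\Id$; one checks directly that the conditions $\alpha(h)=-1$ on the simple roots of $\Gsp$ only pin down $t_{i}^{2}/\lambda$ and $t_{i}/t_{i+1}$, and the solutions with $\lambda=-1$ all have square equal to a square scalar. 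So the factor $\omega_{\pi}(-1)$ cannot be extracted from $g_{0}^{2}$ in this case. It enters instead through the multiplier: $g_{0}$ induces an outer involution $\theta=\Int(g_{0})$ of $\Sp(2n,F_{q})$ with $\pi^{\theta}\simeq\pi^{\vee}$ (a finite-field analogue of Theorem~\ref{waldspurger}), the Whittaker model shows the $\theta$-twisted sign is $+1$, and the comparison between $\varepsilon_{\theta}(\pi)$ and $\varepsilon(\pi)$ produces $\omega_{\pi}(-1)$ via $\omega(\lambda_{g_{0}})$ --- this is precisely the structure of the computation in Section~\ref{main theorem} of this paper, where the factor $\omega_{\tilde{\pi}}(\lambda_{w_{0}})=\omega_{\tilde{\pi}}(-1)$ comes from the multiplier of $w_{0}$, not from $w_{0}^{2}=1$. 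Carrying this out also requires the Clifford-theoretic care you allude to (extending or conjugating $\pi$ by an element outside $\Sp$, with the intertwiner normalized by the Whittaker functional to remove the scalar ambiguity). As it stands, then, the proposal proves the theorem only when $-1\in(F_{q}^{\times})^{2}$ and points in a slightly wrong direction for the other case.
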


Similar ideas along with the idea of compact approximation of whittaker models (\cite{Rod}, section III, pg. 155) can be used to prove the result when the finite field $F_{q}$ is replaced with a non-archimedean local field $F$. 

\subsection{Some results of Waldspurger}\label{Waldspurgers theorem}

In this section, we recall two results of Waldspurger which we need in the proof of the main theorem.

\subsubsection{} Throughout this section, we let $F$ be a non-Archimedean local field of characteristic $\neq 2$ and $W$ be a finite dimensional vector space over $F$. We let $\langle\, , \, \rangle$ to be a non-degenerate symmetric or skew-symmetric form on $W$. We take
\begin{displaymath}
G=\{g\in \GL(W) \mid \langle gw,gw' \rangle=\langle w, w' \rangle\}.
\end{displaymath}
For $x\in \GL(W)$ such that $xGx^{-1}=G$ and $(\pi,V)$ a representation of $G$, we let $\pi^{x}$ denote the representation of $G$ defined by conjugation (i.e., $\pi^{x}(g)=\pi(xgx^{-1})$.\\

We recall the statement of Waldspurger's theorem below and refer the reader to Chapter 4.II.1 in \cite{MoeWalVig} for a proof.

\begin{theorem}[Waldspurger]\label{waldspurger} Let $\pi$ be an irreducible admissible representation of $G$ and $\pi^{\vee}$ be the smooth-dual or contragredient of $\pi$. Let $x\in \GL(W)$ be such that $\displaystyle \langle xw,xw'\rangle = \langle w',w\rangle,\, \forall\, w,w'\in W$. Then $\pi^{x}\simeq \pi^{\vee}$.
\end{theorem}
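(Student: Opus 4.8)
The plan is to prove the isomorphism by comparing Harish-Chandra characters, thereby reducing the representation-theoretic statement to a purely geometric conjugacy statement in $G$. First I would record that conjugation by $x$ genuinely preserves $G$, so that $\pi^{x}$ is a well-defined (and again irreducible admissible) representation. From the defining property of $x$ one extracts, for all $a,b\in W$, the identity $\langle xa,b\rangle = \langle x^{-1}b,a\rangle$: write $b=x(x^{-1}b)$ and apply the hypothesis $\langle xw,xw'\rangle=\langle w',w\rangle$. A short computation using this identity together with $g\in G$ (which gives $\langle s,gt\rangle=\langle g^{-1}s,t\rangle$) then yields $\langle xgx^{-1}u,v\rangle = \langle u,\, xg^{-1}x^{-1}v\rangle$ for all $u,v\in W$, which says that the adjoint of $xgx^{-1}$ with respect to $\langle\,,\,\rangle$ equals its inverse; hence $xgx^{-1}\in G$ and $xGx^{-1}=G$.

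Next I would reduce to a character identity. Since $\pi$ is irreducible admissible, so is $\pi^{x}$, and by Harish-Chandra's theorem an irreducible admissible representation of $G$ is determined up to isomorphism by its character $\Theta$, a conjugation-invariant distribution represented by a locally constant function on the regular semisimple locus. Because $\Theta_{\pi^{x}}(g)=\Theta_{\pi}(xgx^{-1})$ and $\Theta_{\pi^{\vee}}(g)=\Theta_{\pi}(g^{-1})$, it suffices to prove $\Theta_{\pi}(xgx^{-1})=\Theta_{\pi}(g^{-1})$ for all regular $g$. Invoking the invariance of $\Theta_{\pi}$ under $G$-conjugation, this in turn reduces to the geometric claim: for every $g\in G$, the elements $xgx^{-1}$ and $g^{-1}$ are conjugate in $G$.

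This conjugacy claim is the heart of the argument and the step I expect to be the main obstacle. I would establish it from the classification of conjugacy classes in classical groups: an element of $G$ is determined up to $G$-conjugacy by the elementary-divisor data of its action on $W$ together with the isometry class of $\langle\,,\,\rangle$ restricted to the relevant (generalized) eigenspaces. Since $x$ carries $\langle\,,\,\rangle$ to the transposed form $(w,w')\mapsto\langle w',w\rangle$, one checks that $xgx^{-1}$ and $g^{-1}$ carry exactly the same invariants — the eigenvalues of each being the reciprocals of those of $g$, with matching form data on eigenspaces — so that the two elements are $G$-conjugate. Carrying this verification through all cases of the classification is the genuinely laborious part, and I would follow the invariant-theoretic analysis of classical groups as in Chapter 4.II.1 of \cite{MoeWalVig}. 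Combining the three steps gives $\Theta_{\pi^{x}}=\Theta_{\pi^{\vee}}$ and hence $\pi^{x}\simeq\pi^{\vee}$.
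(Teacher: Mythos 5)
The paper does not prove this statement; it simply cites Chapter 4.II.1 of \cite{MoeWalVig}, and your outline is precisely the argument given there: check that $x$ normalizes $G$, reduce via linear independence and Harish--Chandra regularity of characters to the identity $\Theta_{\pi}(xgx^{-1})=\Theta_{\pi}(g^{-1})$, and then establish the geometric fact that $xgx^{-1}$ and $g^{-1}$ are $G$-conjugate. You correctly identify that last conjugacy statement as the real content --- it is the hard proposition of MVW, proved by the classification of conjugacy classes of isometries via form data on generalized eigenspaces --- and since you defer that verification to the same reference the paper itself relies on, your proposal is sound and takes essentially the same route.
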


\subsubsection{} Throughout this section, we take $G=G(F)$ and write $\mathcal{W}=W(G)$ for the Weyl group of $G$. For $H$ a subgroup of $G$, $s\in \mathcal{W}$, and $(\sigma,V)$ a representation of $H$, we let $^{s}\sigma(x)=\sigma(s^{-1}xs)$.

\begin{theorem}[Waldspurger]\label{waldspurger tempered} Let $\pi$ be an irreducible admissible tempered representation of $G$. Then:
\begin{enumerate}
\item[(i)] There exists a parabolic subgroup $P=MU$ and an irreducible admissible square integrable representation $\sigma$ of $M$ such that $\pi$ is a sub-quotient of $\ind_{P}^{G}\sigma$.
\item[(ii)] If $(P=MU, \sigma)$, $(P'=M'U', \sigma')$ are two representations of $G$ satisfying (i) above, then there exists $s\in \mathcal{W}$ such that $sMs^{-1}=M'$ and $^{s}\sigma=\sigma^{'}$.
\end{enumerate}
\end{theorem}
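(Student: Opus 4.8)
The plan is to derive both parts from Harish-Chandra's harmonic analysis on $G$, for which \cite{MoeWalVig} is the natural source; I only sketch the route rather than reproduce it.

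For part (i), I would begin from the subquotient theorem: every irreducible admissible $\pi$ embeds in $\ind_{Q}^{G}\rho$ for some parabolic $Q=LV$ and some irreducible supercuspidal $\rho$ of $L$, where the \emph{cuspidal support} $(L,\rho)$ is determined up to $G$-conjugacy. The temperedness hypothesis is then converted into information about exponents via Casselman's criterion: the central exponents of $\pi$ along every standard parabolic have real part lying in the closed negative chamber. The idea is to use these inequalities to regroup the cuspidal datum, choosing the smallest Levi $M\supseteq L$ on which the relevant unramified twist of $\rho$ assembles into a genuinely square-integrable representation $\sigma$; square-integrability of $\sigma$ on $M$ is again checked through Casselman's criterion for $M$ (now with strict inequalities). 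This passage from $L$ to $M$ is exactly what upgrades "supercuspidal support" to "discrete-series support," and it is equivalent to locating $\pi$ in the support of the Plancherel measure. Finally, since $\sigma$ is discrete series and the inducing character is unitary, $\ind_{P}^{G}\sigma$ is unitary and its commuting algebra is semisimple by Harish-Chandra's theory (the Knapp--Stein $R$-group), so it is completely reducible and "sub-quotient" may in fact be upgraded to "direct summand."

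For part (ii), the rigidity of the datum, I would argue as follows. If $\pi$ is a constituent of both $\ind_{P}^{G}\sigma$ and $\ind_{P'}^{G}\sigma'$, then computing the cuspidal support of $\pi$ in two ways and invoking its uniqueness forces the supercuspidal supports of $\sigma$ and $\sigma'$ to be $G$-conjugate, producing a Weyl element $s\in\mathcal{W}$ with $sLs^{-1}=L'$ matching the supercuspidal data. The remaining task is to promote this to $sMs^{-1}=M'$ and ${}^{s}\sigma=\sigma'$. Here I would use that the Levi $M$ is intrinsically recovered from $\pi$ (it is the stabilizer of the real part of the exponents, i.e. the Levi attached to the discrete-series packaging), so $M$ and $M'$ must correspond under the \emph{same} $s$; and then that a square-integrable representation of $M$ with prescribed supercuspidal support is pinned down among the constituents of its own induced-from-supercuspidal representation, whence ${}^{s}\sigma$ and $\sigma'$, having the same support and both being discrete series, must coincide.

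The main obstacle is the heart of (i): producing a genuinely \emph{square-integrable} inducing datum together with the unitary and semisimple structure of $\ind_{P}^{G}\sigma$. Both rest on the full theory of the constant term and the Maass--Selberg relations underlying the Plancherel formula, in particular on Casselman's square-integrability and temperedness criteria and on the analysis of the intertwining algebra. The rigidity in (ii) is comparatively formal once uniqueness of cuspidal support is available, but it still requires careful bookkeeping of exponents under $\mathcal{W}$ to guarantee that the Levi subgroups, and not merely the supercuspidal supports, are matched by a single Weyl element.
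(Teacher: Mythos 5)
The paper does not prove this statement; it is quoted verbatim from the literature, with the reader referred to III.4.1 of Waldspurger's \emph{La formule de Plancherel pour les groupes $p$-adiques} for the proof. So the only meaningful comparison is between your sketch and the standard argument given there. Your outline of part (i) follows that route correctly: Jacquet's subrepresentation theorem gives a supercuspidal pair $(L,\rho)$, Casselman's temperedness criterion places the real parts of the exponents in the closed negative chamber, the face of the chamber containing them singles out an intermediate Levi $M\supseteq L$, and Casselman's square-integrability criterion (now with strict inequalities relative to $M$) shows the resulting constituent $\sigma$ of $\ind_{Q\cap M}^{M}\rho$ is discrete series; unitarity and Harish-Chandra's commuting algebra theorem then upgrade ``subquotient'' to ``direct summand.'' This is the right skeleton, though the verification that \emph{all} exponents of the chosen $\sigma$ are strictly negative for $M$ is where the real work sits.

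Part (ii) of your proposal, however, contains a genuine gap. The final step asserts that a square-integrable representation of $M$ ``with prescribed supercuspidal support is pinned down,'' i.e.\ that ${}^{s}\sigma$ and $\sigma'$ must coincide because they are both discrete series with the same cuspidal support. This is false in general: a representation of $M$ induced from a (twisted) supercuspidal can have \emph{several inequivalent} square-integrable constituents. This phenomenon is not exotic --- it occurs already for $\Sp(4,F)$, the group of this paper, where certain generalized principal series have two inequivalent square-integrable subquotients (this is how non-singleton discrete series $L$-packets arise). Consequently, uniqueness of cuspidal support cannot by itself deliver ${}^{s}\sigma\simeq\sigma'$; it only matches the pairs $(L,\rho)$ and $(L',\rho')$. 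The actual proof of (ii) requires the harder disjointness statement: two representations induced from discrete series data that are not associate have no common constituents, and for a common Levi the constituents of $\ind_{P}^{G}\sigma$ and $\ind_{P}^{G}\sigma'$ overlap only if $\sigma'\simeq{}^{s}\sigma$ for some $s$ normalizing $M$. This is established via the weak constant term of matrix coefficients and the Maass--Selberg relations (equivalently, the Plancherel-theoretic analysis of the intertwining algebra), which read off the class of $(M,\sigma)$ up to association directly from the asymptotics of $\pi$. Your appeal to ``$M$ is intrinsically recovered from the exponents'' gestures at this, but the decisive input is the recovery of $\sigma$ itself, not just of $M$, and that step is missing from your argument.
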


We refer the reader to III.4.1 in \cite{Wal[1]} for the proof of the above result.

%
%
%
%

\subsection{Reduction to Tempered case}\label{reduction} Throughout this section, we use the same notation and terminology as in \cite{RocSpa}. In \cite{RocSpa}, Roche and Spallone reduce the problem of computing the $\theta$-twisted sign to the case of tempered representations. We briefly recall their method below. For further details, we refer the reader to sections $\S 3, \S4$ of \cite{RocSpa}. \\

Let $\theta$ be an involutory automorphism of $G$ and suppose that $\pi^{\theta}\simeq \pi^{\vee}$. Let $(P, \tau, \nu)$ be the triple associated to $\pi$ via the Langlands' classification. Suppose that $P$ has Levi decomposition $P=MN$. Under certain assumptions on the involution $\theta$, they apply Casselman's pairing to show that $\varepsilon_{\theta}(\pi)=\varepsilon_{\theta}(\pi_{N})$, where $\pi_{N}$ is the Jacquet module of $\pi$. Using $\pi^{\theta}\simeq \pi^{\vee}$ and the fact that $\tau$ occurs with multiplicity one as a composition factor of $\pi_{N}$, they prove the following

\begin{theorem}[Roche-Spallone]\label{reduction to tempered case} Let $\pi$ be an irreducible smooth representation of $G$ such that $\pi^{\theta}\simeq \pi^{\vee}$. Suppose the Langlands' classification attaches the triple $(P, \tau, \nu)$ to $\pi$. Then $\tau^{\theta}\simeq \tau^{\vee}$ and $\varepsilon_{\theta}(\pi)=\varepsilon_{\theta}(\tau)$.
\end{theorem}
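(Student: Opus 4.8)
The plan is to read off both assertions from the uniqueness built into the Langlands classification together with Casselman's theory of Jacquet modules, now run through the $\theta$-twisted formalism of Section~\ref{sign of a rep}. Throughout write $I(P,\tau,\nu)=\ind_{P}^{G}(\tau\otimes\nu)$ for the standard module attached to the triple, so that $\pi$ is its unique irreducible (Langlands) quotient and $\nu$ lies in the open positive chamber.

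The equivalence $\tau^{\theta}\simeq\tau^{\vee}$ I would obtain formally. Since normalized induction commutes with the contragredient and $(\tau\otimes\nu)^{\vee}=\tau^{\vee}\otimes\nu^{-1}$, passing to the unique irreducible subobject of $I(P,\tau,\nu)^{\vee}$ identifies $\pi^{\vee}$ with the Langlands quotient carrying data $(P,\tau^{\vee},\nu)$. On the other hand $I(P,\tau,\nu)^{\theta}\simeq\ind_{\theta^{-1}(P)}^{G}\big((\tau\otimes\nu)^{\theta}\big)$, so $\pi^{\theta}$ is the Langlands quotient with data $(\theta^{-1}(P),\tau^{\theta},\nu^{\theta})$; here the hypotheses on $\theta$ are precisely what let me transport $\theta^{-1}(P)$ back to the standard parabolic $P$ and recognize $\nu^{\theta}$ as the correct dominant exponent. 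Feeding the hypothesis $\pi^{\theta}\simeq\pi^{\vee}$ into the uniqueness clause of the classification then forces the two triples to coincide, and in particular yields $\tau^{\theta}\simeq\tau^{\vee}$ as representations of $M$.

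For the equality of signs I would descend the $\theta$-twisted form to Jacquet modules. By Casselman's theory the normalized Jacquet module $\pi_{N}$ has finite length and contains the constituent attached to $\tau$ with multiplicity one, while Casselman's canonical pairing identifies the Jacquet module of $\pi^{\vee}$ along the opposite unipotent radical with the smooth dual of $\pi_{N}$. Under the compatibility of $\theta$ with $P$, the isomorphism $s\colon\pi^{\theta}\to\pi^{\vee}$ realizing the twisted form therefore induces a nondegenerate $\theta$-twisted $M$-invariant form on $\pi_{N}$, giving $\varepsilon_{\theta}(\pi)=\varepsilon_{\theta}(\pi_{N})$. Multiplicity one then pins down the restriction of this form to the distinguished constituent: it must be nondegenerate there and of the same symmetry type, so $\varepsilon_{\theta}(\pi_{N})=\varepsilon_{\theta}(\tau)$, the unramified shift by $\nu$ contributing nothing to the symmetry (the same intertwiner witnesses the self-duality, and any central discrepancy is absorbed through \eqref{relation between signs}).

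The genuinely hard part is this middle step, not the two essentially formal endpoints. One must verify that the $G$-invariant $\theta$-twisted form really descends to a \emph{nondegenerate} form on the reducible $M$-module $\pi_{N}$ and then survives restriction to the multiplicity-one piece without collapsing. This is exactly where the unstated ``certain assumptions on the involution $\theta$'' are indispensable: $\theta$ must be adapted to $P$, carrying $N$ to the opposite unipotent radical, so that the $\theta$-twist is matched to the interchange of $N$ with its opposite inherent in Casselman's pairing, and the modulus characters introduced by normalization must be tracked with care. Establishing this descent rigorously is the crux of the argument, and it is carried out in \S3--\S4 of \cite{RocSpa}.
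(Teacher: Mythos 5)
Your proposal matches the paper's treatment: the paper likewise only sketches this result, deducing $\tau^{\theta}\simeq\tau^{\vee}$ from the uniqueness in the Langlands classification, obtaining $\varepsilon_{\theta}(\pi)=\varepsilon_{\theta}(\pi_{N})$ from Casselman's pairing under the stated compatibility of $\theta$ with $P$, and then using multiplicity one of $\tau$ in $\pi_{N}$ to conclude $\varepsilon_{\theta}(\pi_{N})=\varepsilon_{\theta}(\tau)$, deferring the details to \S 3--\S 4 of \cite{RocSpa} exactly as you do. Your identification of the descent to the Jacquet module as the genuinely hard step is consistent with the paper's presentation.
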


\begin{remark} In the above theorem, for $\tau^{\theta}$ to make sense, we need that $\theta$ preserves $M$, i.e., $\theta(M)=M$. This will follow from the assumptions on the involution $\theta$.
\end{remark}

\subsection{Restriction of representations}\label{restriction}
We recall some results about restricting an irreducible representation to a subgroup. These results hold when $G$ is a locally compact totally disconnected group and $H$ is an open normal subgroup of $G$ such that $G/H$ is (finite) abelian. For a more detailed account, we refer the reader to \cite{GelKna} (Lemmas 2.1 and 2.3).

\begin{theorem}[Gelbart-Knapp]\label{Gelbart-Knapp-1} Let $\pi$ be an irreducible admissible representation of $G$. Suppose that $G/H$ is (finite) abelian. Then
\begin{enumerate}
\item[(i)] $\pi|_{H}$ is a finite direct sum of irreducible admissible representations of $H$.
\item[(ii)] When the irreducible constituents of $\pi|_{H}$ are grouped according to their equivalence classes as
\end{enumerate}
\begin{equation*}
\pi|_{H}=\bigoplus_{i=1}^{M} m_{i}\pi_{i}
\end{equation*}
with the $\pi_{i}$ irreducible and inequivalent, the integers $m_{i}$ are equal.
\end{theorem}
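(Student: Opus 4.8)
The plan is to prove both parts by Clifford theory, the essential analytic inputs being the admissibility of $\pi$ and the finiteness of the index $[G:H]$ (the hypothesis that $G/H$ is abelian is not needed for this statement). The organizing principle is that, since $H$ is normal, every operator $\pi(g)$ carries $H$-submodules to $H$-submodules: if $W_{0}\subseteq W$ is $H$-stable then for $h\in H$ and $w\in W_{0}$ one has $\pi(h)\pi(g)w=\pi(g)\pi(g^{-1}hg)w\in\pi(g)W_{0}$, so $\pi(g)W_{0}$ is $H$-stable and, via the isomorphism $\pi(g)$, carries the conjugate $H$-action ${}^{g}\sigma\colon h\mapsto\sigma(g^{-1}hg)$. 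In particular $\pi(g)$ sends an irreducible $H$-submodule of class $\sigma$ isomorphically onto one of class ${}^{g}\sigma$, and hence permutes the $H$-isotypic components of $\pi|_{H}$.

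First I would produce a single irreducible $H$-submodule. Fix a compact open subgroup $K\le H$ with $W^{K}\neq 0$; admissibility makes $W^{K}$ a finite-dimensional module over the Hecke algebra $\mathcal{H}(H,K)=e_{K}\mathcal{H}(H)e_{K}$, and the standard correspondence between $H$-submodules generated by their $K$-fixed vectors and $\mathcal{H}(H,K)$-submodules of $W^{K}$ lets me pass from a simple $\mathcal{H}(H,K)$-submodule of $W^{K}$ (which exists by finite dimensionality) to an irreducible $H$-submodule $U\subseteq W$; write $\sigma$ for its class and $W_{\sigma}$ for its isotypic component. Then $\sum_{g\in G/H}\pi(g)W_{\sigma}$ is a nonzero $G$-stable subspace of $W$, hence equals $W$ by irreducibility of $\pi$; since each summand is a sum of irreducible $H$-modules, $\pi|_{H}$ is semisimple. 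Only finitely many isotypic components occur (at most $[G:H]$ of them), and for a sufficiently small $K'$ admissibility bounds each multiplicity by $\dim W^{K'}/\dim\pi_{i}^{K'}<\infty$, so $\pi|_{H}=\bigoplus_{i=1}^{M}m_{i}\pi_{i}$ is a finite direct sum of irreducibles, which is (i).

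For (ii) I would use the same permutation action to get transitivity and then equality of multiplicities. The identity $\sum_{g\in G/H}\pi(g)W_{\sigma}=W$ involves only isotypic components of classes conjugate to $\sigma$, and distinct isotypic components meet in $0$; hence every $\pi_{i}$ occurring is of the form ${}^{g}\sigma$, i.e. $G/H$ acts transitively on $\{\pi_{1},\dots,\pi_{M}\}$. Finally, if $\pi_{j}\cong{}^{g}\pi_{i}$ then $\pi(g)$ restricts to an $H$-isomorphism $W_{\pi_{i}}\xrightarrow{\ \sim\ }W_{\pi_{j}}$ carrying $m_{i}$ copies of $\pi_{i}$ to $m_{i}$ copies of $\pi_{j}$, so $m_{j}=m_{i}$; transitivity then forces all the $m_{i}$ to coincide.

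The permutation bookkeeping and the multiplicity comparison are formal once semisimplicity is in hand. The one genuine obstacle is the existence of an irreducible $H$-submodule: a general smooth representation of a totally disconnected group need not contain one, so it is essential to exploit admissibility, and the cleanest device is the passage to the finite-dimensional Hecke module $W^{K}$ together with the submodule correspondence invoked above.
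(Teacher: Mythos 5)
The paper does not actually prove this statement; it is quoted from Gelbart--Knapp with a pointer to Lemmas 2.1 and 2.3 of \cite{GelKna}, so your proposal has to be judged on its own terms. Its overall architecture is the standard Clifford-theoretic one and is sound: since $H$ is normal, $\pi(g)$ permutes the $H$-isotypic components, $\sum_{g\in G/H}\pi(g)W_{\sigma}$ is $G$-stable and hence all of $W$, semisimplicity and transitivity follow, and transitivity together with the $H$-isomorphisms $\pi(g)\colon W_{\pi_{i}}\to W_{{}^{g}\pi_{i}}$ forces the multiplicities to agree. You are also right that abelianness of $G/H$ plays no role here and that finiteness of the index and admissibility are the real inputs.

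There is, however, a genuine gap at exactly the step you single out as the crux: producing the first irreducible $H$-submodule. The ``standard correspondence'' does not do what you claim. If $M_{0}\subseteq W^{K}$ is a simple $\mathcal{H}(H,K)$-submodule, then $U=\mathcal{H}(H)M_{0}$ does satisfy $U^{K}=M_{0}$, but $U$ need not be irreducible: it has a unique irreducible \emph{quotient} with nonzero $K$-fixed vectors, while its socle may have no $K$-fixed vectors at all. A concrete instance: for $\GL(2,F)$ and $K$ hyperspecial, take an unramified principal series whose unique irreducible submodule is an unramified twist of the Steinberg representation; the spherical line is a simple module over the spherical Hecke algebra, yet it generates the entire reducible induced representation. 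So the correspondence hands you irreducible quotients, not submodules, and admissibility alone does not obviously repair this. The fix is easy and uses precisely the finite index you set aside at this point: $\pi|_{H}$ is finitely generated as an $H$-module (by $\pi(g_{1})v,\dots,\pi(g_{n})v$ for coset representatives $g_{i}$ and any $0\neq v$), so by Zorn it has a maximal proper $H$-submodule $V$; then $\bigcap_{g\in G/H}\pi(g)V$ is a proper $G$-submodule of $W$, hence zero, and $W\hookrightarrow\bigoplus_{g\in G/H}W/\pi(g)V$ exhibits $\pi|_{H}$ as a submodule of a finite direct sum of irreducible $H$-modules. This gives semisimplicity and finite length in one stroke, after which your permutation and multiplicity arguments go through verbatim. (Alternatively: take an irreducible quotient of the finitely generated admissible module $\pi|_{H}$ and dualize.)
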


\begin{theorem}[Gelbart-Knapp]\label{Gelbart-Knapp-2} Let $G$ be a locally compact totally disconnected group and $H$ be an open normal subgroup of $G$ such that $G/H$ is (finite) abelian, and let $\pi$ be an irreducible admissible representation of $H$. Then
\begin{enumerate}
\item[(i)] There exists an irreducible admissible representation $\tilde{\pi}$ of $G$ such that $\tilde{\pi}|_{H}$ contains $\pi$ as a constituent.
\item[(ii)] Suppose $\tilde{\pi}$ and $\tilde{\pi}'$ are irreducible admissible representations of $G$ whose restrictions to $H$ are multiplicity free and contain $\pi$. Then $\tilde{\pi}|_{H}$ and $\tilde{\pi}'|_{H}$ are equivalent and $\tilde{\pi}$ is equivalent with $\tilde{\pi}'\otimes \chi$ for some character $\chi$ of $G$ that is trivial on $H$.
\end{enumerate}
\end{theorem}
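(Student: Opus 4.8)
\emph{Proof proposal.} My plan is to deduce both statements from the formal properties of smooth induction $\Ind_{H}^{G}$ along the open, finite-index, normal subgroup $H$, using Theorem~\ref{Gelbart-Knapp-1} as the only external input. Since $[G:H]<\infty$, smooth induction coincides with compact induction; it is exact, sends admissible representations to admissible representations, is two-sided adjoint to restriction (Frobenius reciprocity in each variable), and satisfies the projection formula $\sigma\otimes\Ind_{H}^{G}\mathbf{1}\simeq\Ind_{H}^{G}(\sigma|_{H})$ for every smooth $G$-representation $\sigma$. Finally, because $G/H$ is finite abelian its regular representation is the sum of its characters, so, viewing characters of $G/H$ as characters $\chi$ of $G$ trivial on $H$,
\[\Ind_{H}^{G}\mathbf{1}\;\simeq\;\bigoplus_{\chi}\chi.\]

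For (i), I would first note that $\Ind_{H}^{G}\pi$ is admissible and, since $(\Ind_{H}^{G}\pi)|_{H}\simeq\bigoplus_{g\in G/H}\pi^{g}$ is a finite sum of irreducibles, of finite length as a $G$-representation. I then choose any irreducible $G$-subrepresentation $\tilde{\pi}\subseteq\Ind_{H}^{G}\pi$, which is automatically admissible. By Frobenius reciprocity
\[0\neq\Hom_{G}(\tilde{\pi},\Ind_{H}^{G}\pi)\simeq\Hom_{H}(\tilde{\pi}|_{H},\pi),\]
so $\pi$ is a quotient, hence by Theorem~\ref{Gelbart-Knapp-1}(i) a direct summand, of $\tilde{\pi}|_{H}$. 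This settles (i).

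For (ii), by Theorem~\ref{Gelbart-Knapp-1}(i) both $\tilde{\pi}|_{H}$ and $\tilde{\pi}'|_{H}$ are semisimple and, by hypothesis, contain $\pi$ as a summand; projecting $\tilde{\pi}|_{H}$ onto its $\pi$-part and including into $\tilde{\pi}'|_{H}$ produces a nonzero element of $\Hom_{H}(\tilde{\pi}|_{H},\tilde{\pi}'|_{H})$. Combining the projection formula, Frobenius reciprocity, and the decomposition of $\Ind_{H}^{G}\mathbf{1}$ gives
\[\Hom_{H}(\tilde{\pi}|_{H},\tilde{\pi}'|_{H})\simeq\Hom_{G}\!\big(\tilde{\pi},\tilde{\pi}'\otimes\Ind_{H}^{G}\mathbf{1}\big)\simeq\bigoplus_{\chi}\Hom_{G}(\tilde{\pi},\tilde{\pi}'\otimes\chi).\]
Since the left-hand side is nonzero, some $\chi$ (trivial on $H$) satisfies $\Hom_{G}(\tilde{\pi},\tilde{\pi}'\otimes\chi)\neq 0$; as $\tilde{\pi}$ and $\tilde{\pi}'\otimes\chi$ are irreducible, Schur's lemma forces $\tilde{\pi}\simeq\tilde{\pi}'\otimes\chi$. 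Restricting and using $\chi|_{H}=\mathbf{1}$ then yields $\tilde{\pi}|_{H}\simeq\tilde{\pi}'|_{H}$, completing (ii).

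The conceptual crux, and the step I expect to carry all the weight, is the projection-formula identity that converts a single shared $H$-constituent $\pi$ into the existence of a twisting character $\chi$; once this is in place the rest is bookkeeping. The genuinely technical points are the supporting functorialities—exactness and admissibility-preservation of $\Ind_{H}^{G}$, the two-sided Frobenius adjunction, and the projection formula—which must be justified in the category of smooth representations of a totally disconnected group rather than for finite-dimensional representations, but all of these are standard for an open subgroup of finite index. The multiplicity-free hypothesis enters only to make the conclusion $\tilde{\pi}|_{H}\simeq\tilde{\pi}'|_{H}$ clean as an identity of representations; the twisting statement itself uses nothing beyond $\pi$ being a common constituent.
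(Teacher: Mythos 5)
Your argument is correct. Note that the paper does not actually prove this statement; it is quoted verbatim from Gelbart--Knapp (Lemmas 2.1 and 2.3 of \cite{GelKna}), so there is no in-paper proof to compare against. What you give is the standard Clifford-theoretic argument for an open normal subgroup of finite abelian index, and it holds together: $\Ind_{H}^{G}\pi$ is admissible of finite length (its restriction to $H$ is $\bigoplus_{g\in G/H}\pi^{g}$, of length $[G:H]$), so an irreducible subrepresentation $\tilde{\pi}$ exists, and Frobenius reciprocity plus the semisimplicity of $\tilde{\pi}|_{H}$ from Theorem~\ref{Gelbart-Knapp-1}(i) gives (i); for (ii), the chain $\Hom_{H}(\tilde{\pi}|_{H},\tilde{\pi}'|_{H})\simeq\Hom_{G}(\tilde{\pi},\tilde{\pi}'\otimes\Ind_{H}^{G}\mathbf{1})\simeq\bigoplus_{\chi}\Hom_{G}(\tilde{\pi},\tilde{\pi}'\otimes\chi)$ is valid because induction and compact induction coincide at finite index and the regular representation of the finite abelian group $G/H$ splits into its characters, and a nonzero map between irreducibles is an isomorphism. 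Your closing observation is also accurate: the multiplicity-free hypothesis is not needed for your derivation of the twisting character, and the equivalence $\tilde{\pi}|_{H}\simeq\tilde{\pi}'|_{H}$ then follows for free from $\chi|_{H}=\mathbf{1}$; the hypothesis is simply part of how Gelbart--Knapp phrase their lemma. This is essentially the same mechanism as in the cited source, so nothing is gained or lost relative to the paper beyond self-containedness.
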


%
%
%
%
%
%
%
%
%
%
%
%
\section{Main Theorem}\label{main theorem}

Throughout this section, we set $G=\Sp(4,F)$ and write $I$ for an Iwahori subgroup in $G$. We let $\pi$ denote an irreducible smooth self-dual Iwahori spherical representation of $G$. We prove that $\varepsilon(\pi)=1$. \\

We consider the following cases to prove the main theorem:
\begin{enumerate}
\item[a)] $\pi$ is generic.
\item[b)] $\pi$ is square integrable.
\item[b)] $\pi$ is not generic.
\end{enumerate}

\subsection{$\pi$ is a generic representation} When $\pi$ is a generic representation of $G$, it is well known that $\varepsilon{(\pi)}=\omega_{\pi}(-1)$ (see \S 7, proposition 2 in \cite{Pra[1]}). Since $\pi$ is Iwahori spherical, it is easy to see that $\omega_{\pi}(-1)=1$ and the result follows. Indeed, for $0\neq v_{0}\in \pi^{I}$, we have \[v_{0}= \pi(-1)v_{0} = \omega_{\pi}(-1)v_{0}. \]

\subsection{$\pi$ is square integrable}

Throughout this section, we let $G=\Sp(4,F)$ and $\tilde{G}=\Gsp(4,F)$. We write $T$ (respectively $\tilde{T}$) for a maximal $F$-split torus in $G$ (respectively $\tilde{G}$). We write $I$ (respectively $\tilde{I}$) for an Iwahori subgroup in $G$ (respectively $\tilde{G}$). For $g\in \tilde{G}$, we let $\lambda_{g}\in F^{\times}$ denote the multiplier of $g$. It is easy to see that the map $g\rightarrow \lambda_{g}$ defines a homomorphism between $\tilde{G}$ and $F^{\times}$. We denote it by $\lambda$. \\

Let $(\pi,W)$ be an irreducible smooth self-dual square integrable representation of $G$ with non-trivial vectors fixed under an Iwahori subgroup $I$. We know that there exists an irreducible smooth square integrable representation $(\tilde{\pi},V)$ of $\tilde{G}$ such that the restriction of $\tilde{\pi}$ to $G$ contains the representation $\pi$ with multiplicity one. i.e., \[\tilde{\pi}|_{G}= \pi_{1}\oplus \cdots \oplus \pi_{k}\] with say $\pi_{1}\simeq \pi$. We can show that the representation $\tilde{\pi}$ can be chosen in such a way that $\tilde{\pi}^{\tilde{I}}\neq 0$. We do this by choosing a character $\tau_{1}$ of $\tilde{I}$ which is trivial on $I$ and extending it to a character $\tilde{\tau}$ of $\tilde{G}$ which is trivial on $G$ and considering the representation $\tilde{\pi}\tilde{\tau}^{-1}$ (see Lemma 4.11, Theorem 4.12 in \cite{Kum}). To simplify notation, we denote $\tilde{\pi}\tilde{\tau}^{-1}$ again as $\tilde{\pi}$. By construction, the character $\tilde{\tau}$ is a unitary character of $\tilde{G}$ and it follows that the modified representation $\tilde{\pi}$ is a square integrable representation which is also Iwahori spherical and contains the representation $\pi$ with multiplicity one on restriction to $G$. Since $\tilde{\pi}$ is an Iwahori spherical and a square integrable representation, it is generic (see \cite{SalTad}, \cite{Sch}). It follows that $\pi$ is also genric and by the previous case, we have $\varepsilon(\pi)=1$.

\subsection{$\pi$ is not a generic representation} We now consider the case when $\pi$ is not a generic representation of $G$. In this case, we prove the result by considering further classifications of the representation $\pi$. To be more precise, we consider the following cases separately\\

\begin{enumerate}
\item[a)] $\pi$ is not tempered.
\item[b)] $\pi$ is tempered (but not discrete series).\\
\end{enumerate}

Before we proceed further, we set up some notation and record a few observations. For vectors $v=(v_{1}, v_{2}, v_{3}, v_{4}), u=(u_{1}, u_{2}, u_{3}, u_{4}) \in F^{4}$, we let \[\langle v, u \rangle ^{'} = v_{1}u_{4} + v_{2}u_{3} - v_{3}u_{2} - v_{4}u_{1}\]
to be the skew-symmetric bilinear form used to define the symplectic group and let $w_{0}\in \GL(4,F)$ to be the matrix with anti-diagonal entries $1$. It is easy to see that $w_{0}^{2}=1$ and $\langle w_{0}v, w_{0}u \rangle ^{'} = \langle u , v \rangle ^{'}$. It is also clear that $w_{0}$ normalizes $G$. Indeed, for $g\in G$, we have
\begin{align*}
\langle w_{0}gw_{0}^{-1}v, w_{0}gw_{0}^{-1}u\rangle ^{'} &= \langle gw_{0}^{-1}u, gw_{0}^{-1}v \rangle ^{'}\\
&= \langle w_{0}^{-1}u, w_{0}^{-1}v \rangle ^{'}\\
&= \langle v, u \rangle  ^{'}.
\end{align*}

By Waldspurger's theorem (Chapter 4.II.1 in \cite{MoeWalVig}), it follows that $\pi^{\theta}\simeq \pi^{\vee}$. For $g\in \tilde{G}=\Gsp(4,F)$, we let $\iota(g)= \lambda_{g}^{-1}w_{0}gw_{0}^{-1}$, where $\lambda_{g}$ is the multiplier of $g$. It is easy to see that $\iota$ is a continuous automorphism of $\tilde{G}$ of order two and the restriction of $\iota$ to $G$ is $\theta$. Let $\tilde{\pi}$ be an irreducible smooth representation of $\tilde{G}$ such that the restriction of $\tilde{\pi}$ to $G$ contains the representation $\pi$ with multiplicity one, i.e., we have
\[\tilde{\pi}|_{G} = \pi_{1} \oplus \cdots \oplus \pi_{k},\]
with say $\pi_{1}\simeq \pi$. It can be shown that $\tilde{\pi}^{\iota}\simeq \tilde{\pi}^{\vee}$ (we refer the reader to Theorem B in \cite{RocVin} for a proof) and $\varepsilon_{\iota}(\tilde{\pi})$ makes sense. Since $\iota|_{G}=\theta$, and the restriction of the bilinear form on $\tilde{\pi}$ satisfying

\begin{equation}\label{form on pi tilde}
[\tilde{\pi}(g)(v), \tilde{\pi}^{\iota}(g)(w)] = [v,w]
\end{equation}

to $\pi$ is non degenerate, and we have
\begin{equation}\label{sign of iota andd theta}
\varepsilon_{\iota}(\tilde{\pi}) = \varepsilon_{\theta}(\pi).
\end{equation}

Consider the representation $\tilde{\pi}^{\vee}\simeq \tilde{\pi}\otimes \omega_{\tilde{\pi}}^{-1}$ (Here $\omega_{\tilde{\pi}}$ is the character obtained by composing the central character $\omega_{\tilde{\pi}}$ with the map $\lambda: \tilde{G}\rightarrow F^{\times}$). Since $\pi\simeq \pi^{\vee}$, it follows that the restriction of $\tilde{\pi}^{\vee}$ to $G$ also contains $\pi$ with multiplicity one. The bilinear form $(\,,\,)$ on $\tilde{\pi}$ satisfying
\[(\tilde{\pi}(g)v, \tilde{\pi}(g)w) = \omega_{\tilde{\pi}}(\lambda_{g})(v,w)\]
restricts to a non-degenerate $G$-invariant bilinear form on $\pi$ and it follows that
\begin{equation}\label{sign of pi tilde and pi}
\varepsilon(\tilde{\pi})=\varepsilon(\pi).
\end{equation}

We use the above results to establish a relation between the twisted sign $\varepsilon_{\theta}(\pi)$ and the ordinary sign $\varepsilon(\pi)$. We record the result in the following lemma.   \\

\begin{lemma} $\varepsilon_{\theta}(\pi) = \varepsilon(\pi)\omega_{\pi}(-1)$.
\end{lemma}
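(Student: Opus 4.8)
The plan is to relate the twisted automorphism $\theta$ to the trivial automorphism via an inner automorphism and then apply the formula \eqref{relation between signs}. Recall that $\theta$ acts on $G=\Sp(4,F)$ by $\theta(g)=w_0 g w_0^{-1}$, where $w_0$ is the anti-diagonal matrix satisfying $w_0^2=1$ and normalizing $G$. The key observation is that $\theta$ is itself an inner automorphism of the ambient group, but we must check whether $w_0$ lies in $G=\Sp(4,F)$ or only in the larger group $\GL(4,F)$. A direct computation with the form $\langle\,,\,\rangle'$ shows that $w_0$ reverses the form (indeed $\langle w_0 v, w_0 u\rangle' = \langle u,v\rangle'$), so $w_0 \notin \Sp(4,F)$; rather $w_0$ has multiplier $-1$ and lies in $\Gsp(4,F)$. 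This is exactly why $\theta$ is a genuinely nontrivial (outer-looking) involution on $G$ even though it becomes inner on $\tilde G$.

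First I would exhibit an element $h\in G$ realizing $\theta$ up to a genuine inner automorphism. Concretely, since both $\theta$ and the trivial automorphism send $\pi$ to $\pi^\vee$ (by Waldspurger's theorem \ref{waldspurger} and self-duality respectively), I set $\theta' = \mathrm{id}$ and write $\theta' = \Int(h)\circ\theta$ for a suitable $h$; equivalently I choose $h\in G$ with $h\,w_0 g w_0^{-1} h^{-1} = g$ for all $g$, i.e. $hw_0$ centralizes $G$. Since the centralizer of $\Sp(4,F)$ in $\GL(4,F)$ is the scalars, $hw_0$ must be a scalar matrix, and matching multipliers forces $h = c\,w_0^{-1}$ with $c$ scalar chosen so that $h\in G=\Sp(4,F)$. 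One checks $w_0^{-1}=w_0$ acts on the symplectic form by a sign, and the scalar $c$ absorbs this; I would verify that such an $h\in G$ exists and compute $\theta(h)h = h^2$, which by construction is the central element of $\Sp(4,F)$ equal to $-I$ (the only nontrivial central element, since the center of $\Sp(4,F)$ is $\{\pm I\}$).

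Next I would apply the relation \eqref{relation between signs} with this choice. Since $\theta'=\Int(h)\circ\theta$ is the trivial automorphism, we have $\varepsilon_{\theta'}(\pi)=\varepsilon(\pi)$, and the formula gives
\begin{equation*}
\varepsilon(\pi) = \varepsilon_{\theta'}(\pi) = \varepsilon_\theta(\pi)\,\omega_\pi(\theta(h)h) = \varepsilon_\theta(\pi)\,\omega_\pi(-I).
\end{equation*}
Since $\omega_\pi(-I)=\omega_\pi(-1)\in\{\pm 1\}$ is its own inverse, rearranging yields $\varepsilon_\theta(\pi)=\varepsilon(\pi)\,\omega_\pi(-1)$, which is the desired identity. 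The verification that $h\theta(h)$ is central and equals $\theta(h)h$ is guaranteed in general by the discussion preceding \eqref{relation between signs}, so the only substantive input is identifying the central element explicitly as $-I$.

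The main obstacle I anticipate is the bookkeeping in pinning down $h$ precisely and confirming $h^2=-I$ rather than $+I$; the sign here is exactly what produces the factor $\omega_\pi(-1)$, so getting it right is essential. One must be careful that $h$ genuinely lies in $\Sp(4,F)$ (not merely in $\Gsp(4,F)$) and that the multiplier and determinant conditions are consistent — a wrong normalization would either place $h$ outside $G$ or collapse the central element to the identity, trivializing the lemma. Verifying $\theta'$ has order at most two, a hypothesis needed to invoke \eqref{relation between signs}, is a routine check once $h$ is fixed, since $\theta'$ is inner and $\pi^{\theta'}\simeq\pi^\vee$ holds automatically from self-duality.
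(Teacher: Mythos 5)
There is a genuine gap, and it is exactly at the point you flagged as ``bookkeeping'': the element $h$ you need does not exist in $G=\Sp(4,F)$ in general. Your reduction is correct up to the point where you conclude that any $h\in G$ with $\Int(h)\circ\theta=\mathrm{id}$ must satisfy $hw_{0}\in$ (scalars), so $h=cw_{0}$ for some $c\in F^{\times}$. But $w_{0}$ satisfies $\langle w_{0}v,w_{0}u\rangle'=\langle u,v\rangle'=-\langle v,u\rangle'$, i.e.\ $w_{0}$ has multiplier $-1$, while a scalar $c$ has multiplier $c^{2}$; hence $h=cw_{0}$ lies in $\Sp(4,F)$ if and only if $-c^{2}=1$, i.e.\ if and only if $-1$ is a square in $F$. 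For a general non-Archimedean local field of characteristic $0$ (e.g.\ $\mathbb{Q}_{p}$ with $p\equiv 3 \pmod 4$) this fails, so $\theta=\Int(w_{0})|_{G}$ is \emph{not} conjugation by an element of $G$, the choice $\theta'=\mathrm{id}$ cannot be written as $\Int(h)\circ\theta$ with $h\in G$, and formula \eqref{relation between signs} — whose derivation requires $\pi(h)$ to make sense, hence $h\in G$ — is not applicable. Your argument proves the lemma only in the special case $-1\in (F^{\times})^{2}$ (where indeed $h^{2}=c^{2}w_{0}^{2}=-I$ and everything goes through).

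The paper's proof is designed precisely to get around this obstruction: since $w_{0}$ does lie in $\tilde{G}=\Gsp(4,F)$, one extends $\pi$ to an irreducible representation $\tilde{\pi}$ of $\tilde{G}$ containing $\pi$ with multiplicity one, checks that $\varepsilon_{\iota}(\tilde{\pi})=\varepsilon_{\theta}(\pi)$ and $\varepsilon(\tilde{\pi})=\varepsilon(\pi)$ by restricting the relevant invariant forms, and then compares the two signs on $\tilde{\pi}$ directly via the auxiliary form $[v_{1},v_{2}]'=(v_{1},\tilde{\pi}(w_{0}^{-1})v_{2})$, where the operator $\tilde{\pi}(w_{0}^{-1})$ is now available. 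The factor $\omega_{\tilde{\pi}}(-1)$ (hence $\omega_{\pi}(-1)$) emerges from $\omega_{\tilde{\pi}}(\lambda_{w_{0}}^{-1})=\omega_{\tilde{\pi}}(-1)$ in that comparison. To repair your argument you would either need to reproduce this passage to $\Gsp(4,F)$, or find a different way to compare $\varepsilon_{\theta}$ with $\varepsilon$ that does not require realizing $\theta$ as an inner automorphism of $\Sp(4,F)$ itself.
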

\begin{proof} Consider the non-degenerate bilinear form on $\tilde{\pi}$ defined by \[[v_{1}, v_{2}]'=(v_{1}, \tilde{\pi}(w_{0}^{-1})v_{2}).\] It is clearly $\tilde{G}$ invariant in the sense of~\eqref{form on pi tilde}. Indeed,
\begin{align*}
[\tilde{\pi}(g)v_{1}, \tilde{\pi}^{\iota}(g)v_{2}]' &= (\tilde{\pi}(g)v_{1}, \tilde{\pi}(w_{0}^{-1})\tilde{\pi}^{\iota}(g)v_{2})\\
&= (\tilde{\pi}(g)v_{1}, \tilde{\pi}(w_{0}^{-1})\tilde{\pi}(\lambda_{g}^{-1}w_{0}gw_{0}^{-1})v_{2})\\
&= (\tilde{\pi}(g)v_{1}, \tilde{\pi}(w_{0}^{-1}\lambda_{g}^{-1}w_{0}gw_{0}^{-1})v_{2})\\
&= (\tilde{\pi}(g)v_{1}, \tilde{\pi}(\lambda_{g}^{-1})\tilde{\pi}(gw_{0}^{-1})v_{2})\\
&= \omega_{\tilde{\pi}}(\lambda_{g}^{-1})(\tilde{\pi}(g)v_{1}, \tilde{\pi}(gw_{0}^{-1})v_{2})\\
&= \omega_{\tilde{\pi}}(\lambda_{g}^{-1})\omega_{\tilde{\pi}}(\lambda_{g})(v_{1}, \tilde{\pi}(w_{0}^{-1})v_{2})\\
&= [v_{1}, v_{2}]'.
\end{align*}

A simple computation shows that \[[v_{1},v_{2}]' = \varepsilon_{\iota}(\tilde{\pi})[v_{2},v_{1}]' = \varepsilon(\tilde{\pi})\omega_{\tilde{\pi}}(-1).\]
Indeed,
\begin{align*}
[v_{1},v_{2}]' &= \varepsilon_{\iota}(\tilde{\pi})[v_{2},v_{1}]'\\
&= (v_{1}, \tilde{\pi}(w_{0}^{-1})v_{2})\\
&= \varepsilon(\tilde{\pi})(\tilde{\pi}(w_{0}^{-1})v_{2}, v_{1})\\
&= \varepsilon(\tilde{\pi})\omega_{\tilde{\pi}}(\lambda_{w_{0}}^{-1})(v_{2}, \tilde{\pi}(w_{0})v_{1})\\
&= \varepsilon(\tilde{\pi})\omega_{\tilde{\pi}}(-1)(v_{2}, \tilde{\pi}(w_{0}^{-1})v_{1})\\
&= \varepsilon(\tilde{\pi})\omega_{\tilde{\pi}}(-1)[v_{2},v_{1}]'.
\end{align*}
The result now follows from ~\eqref{sign of iota andd theta} and ~\eqref{sign of pi tilde and pi}.

\end{proof}

Since $\pi$ has non trivial Iwahori fixed vectors, it follows that $\omega_{\pi}(-1)$ is trivial and hence \[\varepsilon_{\theta}(\pi)=\varepsilon(\pi).\]

We now focus on computing the twisted sign attached to $\pi$.

\subsubsection{$\pi$ is not tempered} Let $(P, \tau, \nu)$ be the triple associated to $\pi$ via the Langlands' classification. We let $M$ and $N$ denote the Levi component and the unipotent radical of the parabolic subgroup $P$. Before we proceed further, we observe that the automorphism $\theta$ satisfies the hypotheses needed in order to apply \textit{Casselman's pairing} as stated in \S 3 of \cite{RocSpa}. To be more precise, we have

\begin{lemma}\label{properties of the involution theta'} The involution $\theta$ satisfies the following conditions
\begin{enumerate}
\item[(i)] $\theta$ is an automorphism of $G$ as an algebraic group.
\item[(ii)] $\theta$ preserves $T$ so that $\theta|_{T}$ is an involutory automorphism of the $F$-split torus $T$ and
\item[(iii)] $\theta$ maps $N$ to the $M$-opposite $\bar{N}$.
\end{enumerate}
\end{lemma}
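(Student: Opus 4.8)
The plan is to verify each of the three conditions directly from the explicit description of $\theta$, which we recall is the restriction to $G$ of the automorphism $\iota(g)=\lambda_g^{-1}w_0 g w_0^{-1}$ of $\tilde G$. Since $\lambda_g=1$ for every $g\in G=\Sp(4,F)$, on $G$ the map $\theta$ is simply $\theta(g)=w_0 g w_0^{-1}$, i.e. conjugation by the anti-diagonal matrix $w_0$. The verification that $w_0$ normalizes $G$ has already been carried out in the excerpt, so $\theta$ is a well-defined automorphism of $G$; condition (i) then amounts to noting that conjugation by a fixed matrix $w_0\in\GL(4,F)$ is given by polynomial maps with polynomial inverse in the matrix entries, hence is a morphism of algebraic groups, and $w_0^2=1$ makes it an involution. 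This step should be routine once one is willing to regard $w_0$ as defining an inner automorphism of $\GL_4$ that happens to preserve the symplectic group.

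For condition (ii), I would fix the maximal $F$-split torus $T$ to be the diagonal torus in $G$, so that an element of $T$ has the form $\mathrm{diag}(t_1,t_2,t_2^{-1},t_1^{-1})$ relative to the antidiagonal form $\langle\,,\,\rangle'$. Since $w_0$ is the antidiagonal permutation matrix, conjugation by $w_0$ reverses the order of the diagonal entries, sending $\mathrm{diag}(t_1,t_2,t_2^{-1},t_1^{-1})$ to $\mathrm{diag}(t_1^{-1},t_2^{-1},t_2,t_1)$. This is again a diagonal element of the same form, so $\theta(T)=T$ and $\theta|_T$ is an involutory automorphism of $T$. The point to record is simply that $\theta$ acts on the cocharacter lattice of $T$ by the longest Weyl element composed with the inversion $t\mapsto t^{-1}$, but all that condition (ii) requires is preservation of $T$, which is immediate from the permutation description of $w_0$.

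The substantive step is condition (iii). Here I would choose the parabolic $P=MN$ compatibly with the Langlands triple and the choice of positive roots for which $T$ is the split component; concretely, with the standard upper-triangular Borel, $N$ is a unipotent subgroup built from the positive root spaces and the opposite $\bar N$ from the negative ones. Because $w_0$ is the longest-element representative that reverses the diagonal, conjugation by $w_0$ sends each positive root space to the corresponding negative root space; thus $\theta(N)=\bar N$, and more generally $\theta$ sends the unipotent radical of any standard parabolic to the unipotent radical of its opposite while preserving the standard Levi $M$. The main obstacle is being careful about the interaction between the antidiagonal symplectic form, the resulting non-standard shape of the root groups, and the action of $w_0$ on them — one must check that $w_0$ genuinely interchanges $N$ and $\bar N$ rather than permuting root groups within $N$. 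I would resolve this by writing down the root system of type $C_2$ explicitly in terms of the characters of $T$, computing the action of $\theta|_T$ on roots (which is $-w_{\mathrm{long}}$, and for $C_2$ the longest element acts as $-1$, so $\theta$ acts trivially on roots up to sign and in fact negates every root), and concluding that positive roots go to negative roots, giving $\theta(N)=\bar N$ as required.

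\medskip

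\noindent\emph{Remark.} With the three conditions in hand, Casselman's pairing in the form used by Roche and Spallone applies, and Theorem~\ref{reduction to tempered case} reduces the computation of $\varepsilon_\theta(\pi)$ to that of $\varepsilon_\theta(\tau)$ for the tempered (indeed square-integrable, by the Langlands data for $\Sp(4)$) representation $\tau$ of the Levi $M$; this is the reduction that the rest of the non-tempered case will exploit.
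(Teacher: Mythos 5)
Your proof is correct, but it takes a genuinely different route from the paper's for the only nontrivial part, item (iii). The paper argues representation-theoretically: from $\pi\hookrightarrow\ind_{P}^{G}(\tau\nu)$ it deduces $\pi^{\vee}\hookrightarrow\ind_{\bar{P}}^{G}(\tau^{\vee}\nu^{-1})$ and, using $\pi^{\vee}\simeq\pi^{\theta}$, also $\pi^{\vee}\hookrightarrow\ind_{\theta(P)}^{G}(\tau^{\theta}\nu^{\theta})$; the uniqueness of the Langlands data then forces $\theta(P)=\bar{P}$, hence $\theta(N)=\bar{N}$, and as a byproduct $\tau^{\theta}\simeq\tau^{\vee}$. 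You instead compute structurally: conjugation by the antidiagonal $w_{0}$ reverses the diagonal, so on $T=\{\mathrm{diag}(t_{1},t_{2},t_{2}^{-1},t_{1}^{-1})\}$ it is inversion of parameters, i.e.\ $-1$ on the character lattice, so every positive root is sent to its negative and the unipotent radical of any standard parabolic goes to that of its opposite. Your argument is more elementary and uniform (it needs nothing about $\pi$, the Langlands triple, or self-duality, and proves the statement for all standard parabolics at once), whereas the paper's argument buys the extra conclusion $\tau^{\theta}\simeq\tau^{\vee}$, which it reuses, though that is in any case restated in Theorem~\ref{reduction to tempered case}. One small bookkeeping slip: you describe the action of $\theta$ on roots as $-w_{\mathrm{long}}$; since $w_{\mathrm{long}}=-1$ for $C_{2}$, that would be the identity. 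The action is $-1$ (equivalently $w_{\mathrm{long}}$ itself), which is what your subsequent sentence correctly uses, so the conclusion stands.
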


\begin{proof} (i) and (ii) are clearly satisfied. For (iii), since $\pi\hookrightarrow\ind_{P}^{G}(\tau\nu)$, it follows (by taking duals) that $\pi^{\vee}$ is a quotient of $\ind_{P}^{G}(\tau^{\vee}\nu^{-1})$. In other words, we have \begin{equation}\label{langlands triple of pi eq 1}\pi^{\vee}\hookrightarrow \ind_{\bar{P}}^{G}(\tau^{\vee}\nu^{-1}). \end{equation}
Since $\pi\simeq \pi^{\theta}\simeq \pi^{\vee}$, it follows that \begin{equation}\label{langlands triple of pi eq 2}\pi^{\vee}\hookrightarrow \ind_{\theta(P)}^{G}(\tau^{\theta}\nu^{\theta}).\end{equation}
From \eqref{langlands triple of pi eq 1} and \eqref{langlands triple of pi eq 2}, and the uniqueness of the Langlands' classification, it follows that $\bar{P}=\theta(P)$ and $\tau^{\theta}\simeq \tau^{\vee}$. In particular, we have $\theta(N)=\bar{N}$.
\end{proof}

From Lemma \ref{properties of the involution theta'} and Theorem \ref{reduction to tempered case}, it follows that
\begin{equation}\label{twisted sign of tau}
\varepsilon_{\theta}(\pi)=\varepsilon_{\theta}(\tau).
\end{equation}

Throughout we write $I_{M}=I\cap M$. Before we continue, we observe that $\tau$ has nontrivial $I_{M}$ fixed vectors. We record the result in the following

\begin{lemma} The representation $\tau$ has non-trivial $I_{M}$ fixed vectors.
\end{lemma}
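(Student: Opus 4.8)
The plan is to relate the Iwahori-fixed vectors of $\pi$ to the $I_M$-fixed vectors of its Langlands quotient data $\tau$, using the Jacquet functor. The key point is that $\pi$ is Iwahori spherical, so $\pi^I \neq 0$, and I must transfer this nonvanishing down to $\tau$. The natural tool is Casselman's theorem on Jacquet modules: for a smooth representation $\pi$ of $G$ and a parabolic $P = MN$, there is a canonical isomorphism between the Iwahori-fixed vectors $\pi^I$ and the $I_M$-fixed vectors of the Jacquet module $(\pi_N)^{I_M}$, provided the Iwahori factorizes compatibly with the parabolic (i.e. $I = (I \cap \bar N)(I \cap M)(I \cap N)$). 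This factorization holds for the standard Iwahori with respect to any standard parabolic containing the torus, which is exactly our situation.

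First I would recall that by the Langlands classification $\pi$ is the unique irreducible quotient (equivalently, via the embedding of $\pi^\vee$, a sub of the opposite induced representation) of $\ind_P^G(\tau\nu)$, and in particular $\tau$ occurs as a subquotient — indeed with multiplicity one — of the Jacquet module $\pi_N$. Next I would invoke Casselman's result that the dimension of $\pi^I$ equals the dimension of $(\pi_N)^{I_M}$ (this is the content of the comparison of $I$-fixed vectors with the $I_M$-fixed vectors of the normalized Jacquet module, as in Casselman's notes or Borel's work on admissible representations with Iwahori-fixed vectors). Since $\pi^I \neq 0$ by hypothesis, we get $(\pi_N)^{I_M} \neq 0$. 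The final step is to pass from the whole Jacquet module $\pi_N$ to its distinguished constituent $\tau$: because $\tau$ is the Langlands constituent appearing with multiplicity one and $\nu$ is an unramified (hence $I_M$-unaffected) twist, the nonvanishing of $(\pi_N)^{I_M}$ forces $\tau^{I_M} \neq 0$.

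The main obstacle I anticipate is the last step, namely ensuring that the $I_M$-fixed vectors genuinely land in the constituent $\tau$ rather than in some other composition factor of $\pi_N$. A priori $(\pi_N)^{I_M} \neq 0$ only tells us \emph{some} subquotient of $\pi_N$ has $I_M$-fixed vectors. To pin this down I would use the structure of the Jacquet module under the Langlands classification: the constituent $\tau\nu$ (up to the unramified twist $\nu$, which does not affect $I_M$-invariants since $\nu$ is unramified and trivial on the compact $I_M$) is precisely the one detected by taking invariants under the Borel-compatible Iwahori, because the exactness of the Jacquet functor and the factorization of $I$ guarantee that the $I$-fixed vectors of $\pi$ map isomorphically onto the $I_M$-fixed vectors of the top piece. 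Concretely, the composition $\pi^I \xrightarrow{\sim} (\pi_N)^{I_M}$ is compatible with the filtration whose relevant graded piece is $\tau\nu$, so $\tau^{I_M} \neq 0$. An alternative, cleaner route that sidesteps the multiplicity bookkeeping is to argue directly: since $\pi$ generates $\ind_P^G(\tau\nu)$ as a quotient and admits an Iwahori-fixed vector, Frobenius reciprocity together with the Iwahori factorization shows that the inducing data $\tau$ must itself be Iwahori spherical at the level of $M$, i.e. $\tau^{I_M}\neq 0$; this is the form I would aim to write out, as it most directly yields the statement of the lemma.
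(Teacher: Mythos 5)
Your proposal is correct and follows essentially the same route as the paper: pass from $\pi^I\neq 0$ to $(\pi_N)^{I_M}\neq 0$ via the Casselman--Borel comparison of Iwahori-fixed vectors with the Jacquet module, descend to the constituent $\tau\nu$, and strip off $\nu$ using that it is a positive-real-valued character, hence trivial on the compact group $I_M$. The one step you leave loose --- getting from $(\pi_N)^{I_M}\neq 0$ to $(\tau\nu)^{I_M}\neq 0$, since exactness of $(\cdot)^{I_M}$ alone does not rule out the fixed vectors dying in the quotient --- is exactly the point the paper closes by citing Lemmas 4.7 and 4.8 of Borel (every subquotient of a representation generated by its Iwahori-fixed vectors again has nonzero Iwahori-fixed vectors); equivalently, your Frobenius-reciprocity remark does the job once you note that $\tau\nu$ is a nonzero \emph{quotient} of $\pi_N$ and that $\pi_N$ is generated by $(\pi_N)^{I_M}$.
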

\begin{proof} Since $\pi\hookrightarrow \ind_{P}^{G}(\tau\nu)$ and $\pi^{I}\neq 0$, it follows that $(\pi_{N})^{I_{M}}\neq 0$. Since $\tau\nu$ occurs as a composition factor of $\pi_{N}$, it follows that $(\tau\nu)^{I_{M}}\neq 0$ (refer Lemma 4.7 and Lemma 4.8 in \cite{Bor}). Now using the fact that $I_{M}$ is compact and $\nu$ is a (continuous) character of $M$ taking positive real values, it is clear that $\nu|_{I_{M}}=1$ and $\tau^{I_{M}}\neq 0$.
\end{proof}

From \eqref{twisted sign of tau}, it is enough to compute the sign $\varepsilon_{\theta}(\tau)$ where $\tau$ is an irreducible tempered representation of $M$ with $\tau^{I_{M}}\neq 0$. Since $M$ is a Levi subgroup of $G$, we have the following possibilities for $M$: \\
\begin{enumerate}
\item[(a)] $M\simeq \GL(1,F) \times \GL(1,F)$\\
\item[(b)] $M\simeq \GL(2,F)$\\
\item[(c)] $M\simeq \SL(2,F) \times \GL(1,F)$
\end{enumerate}


\vspace{0.2 cm}

In all these cases, we show that $\varepsilon_{\theta}(\tau)=1$. \\


 For cases (a) and (b), considering $\theta$ as an automorphism of $\GL(1,F)\times \GL(1,F)$ and $\GL(2,F)$, it is easy to see that $\theta(g)=^{\top}g^{-1}$ (Here $\top$ denotes the transpose). Now using $\tau^{\theta}\simeq \tau^{\vee}$ (as a representation of either $\GL(1,F)\times \GL(1,F)$ or $\GL(2,F)$), and $\tau$ is tempered (hence generic) (see \cite{Jac}, \cite{JacPiaSha}), it follows that $\varepsilon_{\theta}(\tau)=1$ (\S 5, Theorem 1 in \cite{RocSpa}).\\

We now consider the case when $M\simeq \SL(2,F)\times \GL(1,F)$. Since $\tau$ is an irreducible tempered representation of $M$, $\tau\simeq \tau_{1}\otimes \tau_{2}$ where $\tau_{1}$, $\tau_{2}$ are irreducible tempered representations of $\SL(2,F)$ and $\GL(1,F)$ respectively. Since $\tau^{\theta}\simeq \tau^{\vee}$, we have
    \begin{displaymath}
    (\tau_{1}\otimes \tau_{2})^{\theta} \simeq \tau_{1}^{\theta} \otimes \tau_{2}^{\theta} \simeq (\tau_{1}\otimes \tau_{2})^{\vee} \simeq \tau_{1}^{\vee} \otimes \tau_{2}^{\vee}.
    \end{displaymath}
\vspace{0.2 cm}

It is clear that $\tau_{1}^{\theta}\simeq \tau_{1}^{\vee}$ (here $\theta$ acts like the involution $\theta(g) = xgx^{-1}$, where $x$ is the matrix with anti diagonal entries $1$) and $\tau_{2}^{\theta}\simeq \tau_{2}^{\vee}$ (here $\theta$ acts like the involution $\theta(g)=^{\top}g^{-1}$). It is also easy to see that $I_{M}=I\cap M \simeq I_{\SL(2,F)} \times I_{\GL(1,F)}$. Since $\tau^{I_{M}}\neq 0$, it follows that $\tau_{1}^{I_{\SL(2,F)}}\neq 0$ and $\tau_{2}^{I_{\GL(1,F)}}\neq 0$.\\ 

Clearly $\varepsilon_{\theta}(\tau_{2})=1$ (\S 5, Theorem 1 in \cite{RocSpa}). Now consider the representation $\tau_{1}$. Since $\tau_{1}$ is a tempered representation of $\SL(2,F)$, it is generic (follows from Lemma 4.3 in \cite{Kum[2]}). We know that there exists an irreducible smooth representation $\tilde{\tau}_{1}$ of $\GL(2,F)$ such that $\tilde{\tau}_{1}|_{\SL(2,F)}\supset \tau_{1}$ with multiplicity one. We can in fact, choose $\tilde{\tau}_{1}$ such that $\tilde{\tau}_{1}^{I_{\GL(2,F)}}\neq 0$ (see Lemma 4.11 and Theorem 4.12 in \cite{Kum}). It is clear that $(\tilde{\tau}_{1}^{\theta})^{\vee}$ also contains the representation $\tau_{1}$ with multiplicity one. It follows that \begin{equation}\label{representation tau tide}(\tilde{\tau}_{1}^{\theta})^{\vee}\simeq \tilde{\tau}_{1} \otimes \chi,\end{equation} where $\chi$ is a character of $\GL(2,F)$ such that $\chi|_{\SL(2,F)}=1$ (see Lemmas 2.1 and 2.3 in \cite{GelKna}). Using ~\eqref{representation tau tide}, it is easy to see that the space of $\tilde{\tau}_{1}$ admits a non-degenerate bilinear form $[\, , \,]$ satisfying
\begin{equation}\label{sign of tau tilde}
[\tilde{\tau}_{1}(g)v_{1}, (\tilde{\tau}_{1}^{\theta})(g)v_{2}]= \chi^{-1}(\theta(g))[v_{1}, v_{2}]
\end{equation}
It is easy to see that the above form $[\, , \,]$ is unique up to scalars (hence symmetric or skew-symmetric) and we can attach a sign $\varepsilon_{\theta}^{\chi}(\tilde{\tau}_{1}) \in \{\pm 1\}$ capturing the nature of the form. To simplify notation, we write $\varepsilon_{\theta}^{\chi}(\tilde{\tau}_{1})= \varepsilon_{\theta}(\tilde{\tau}_{1})$. We can show that the form $[\, , \,]|_{\tau_{1}\times \tau_{1}}$ is non-degenerate and satisfies
\begin{equation}\label{sign of tau}
[\tau_{1}(g)w_{1}, \tau_{1}^{\theta}(g)w_{2}] = [w_{1}, w_{2}].
\end{equation}
We refer the reader to Lemma 4.14 and Lemma 4.15 in \cite{Kum} for details. It follows from ~\eqref{sign of tau tilde} and ~\eqref{sign of tau} that
\begin{equation}\label{sign of tau tilde is sign of tau}\varepsilon_{\theta}(\tilde{\tau}_{1})=\varepsilon_{\theta}(\tau_{1}).\end{equation}
For $g\in \GL(2,F)$, we let $\alpha(g)= (\Int(x)\circ\theta)(g) =  x\theta(g)x^{-1}$. It is clear that $\alpha(g)=1$ and  \[(\tilde{\tau}_{1}^{\theta})^{\vee}=(\tilde{\tau}_{1}^{\alpha})^{\vee}\simeq \tilde{\tau}_{1}\otimes \chi.\]
A simple computation shows that
\begin{equation}\label{twisted sign of alpha and theta}\varepsilon_{\alpha}(\tilde{\tau}_{1})= \varepsilon_{\theta}(\tilde{\tau}_{1})\omega_{\tilde{\tau}_{1}}(\theta(x)x). \end{equation}
We refer the reader to \S 1.2 in \cite{RocSpa} for the details. Since $\alpha(g)=1$ and $\theta(x)x=1$, it follows that \[ \varepsilon_{\alpha}(\tilde{\tau}_{1})= \varepsilon_{\theta}(\tilde{\tau}_{1})\omega_{\tilde{\tau}_{1}}(\theta(x)x) = \varepsilon_{\theta}(\tilde{\tau}_{1}).\]
\vspace{0.2 cm}
It follows from earlier work that $\varepsilon_{\alpha}(\tilde{\tau_{1}})= 1$ (see \S 4.2.5 in \cite{Kum}) and the result follows.

\subsubsection{$\pi$ is tempered} Throughout this section, we let $W(G)$ denote the Weyl group of $G$. We have \[W(G)=\{1, s_{\alpha}, s_{\beta}, s_{\alpha}s_{\beta}, s_{\beta}s_{\alpha}, s_{\alpha}s_{\beta}s_{\alpha}, s_{\beta}s_{\alpha}s_{\beta}, (s_{\beta}s_{\alpha})^{2}\},\] where
\[s_{\alpha}=\begin{bmatrix*} 0 & 1 & 0 & 0 \\ 1 & 0 & 0 & 0 \\ 0 & 0 & 0 & 1\\ 0 & 0 & 1 & 0\end{bmatrix*}; \quad s_{\beta}= \begin{bmatrix*} 1 & 0 & 0 & 0 \\ 0 & 0 & 1 & 0 \\ 0 & -1 & 0 & 0\\ 0 & 0 & 0 & 1\end{bmatrix*}.\]
\vspace{0.3 cm}


Since $\pi$ is tempered, it follows from part (i) of Waldspurger's theorem (see Theorem~\ref{waldspurger tempered} above), that there exists a parabolic subgroup $P=MN$, and an irreducible smooth discrete series representation $\sigma$ of $M$ such that $\pi\hookrightarrow \ind_{P}^{G}\sigma$. We first show that $^{x}\sigma\simeq (\sigma^{\theta})^{\vee}$ for some $x\in W(G)$. We record the result in the following lemma.

\begin{lemma} There exists $x\in W(G)$ such that $^{x}\sigma\simeq (\sigma^{\theta})^{\vee}$.
\end{lemma}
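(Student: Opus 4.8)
The plan is to combine the two results of Waldspurger recalled above. Since $\pi$ is tempered with $\pi \hookrightarrow \ind_P^G \sigma$ for a discrete series representation $\sigma$ of $M$, the pair $(P = MN, \sigma)$ realizes $\pi$ in the sense of part (i) of Theorem~\ref{waldspurger tempered}. First I would apply the automorphism $\theta$ and the contragredient functor to this inclusion to produce a \emph{second} such realization, and then invoke the uniqueness statement in part (ii) of Theorem~\ref{waldspurger tempered} to extract the required Weyl element $x$.

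\medskip

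The key steps, in order, are as follows. First, starting from $\pi \hookrightarrow \ind_P^G \sigma$, I would take contragredients to obtain a realization of $\pi^\vee$ of the form $\pi^\vee \hookrightarrow \ind_{\bar{P}}^G \sigma^\vee$ (dualizing sends induction from $P$ to induction from the opposite parabolic $\bar{P}$, and $\sigma^\vee$ is again discrete series on the Levi $M$). Second, I would apply $\theta$ to the original inclusion $\pi \hookrightarrow \ind_P^G \sigma$; since $\theta$ is an involutory automorphism of $G$ carrying $P$ to the parabolic $\theta(P)$ with Levi $\theta(M)$, this yields $\pi^\theta \hookrightarrow \ind_{\theta(P)}^G \sigma^\theta$. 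Third, using the hypothesis (established by Waldspurger's Theorem~\ref{waldspurger}) that $\pi^\theta \simeq \pi^\vee$, both of these are realizations of the \emph{same} tempered representation $\pi^\vee$: namely $(\theta(P) = \theta(M)\theta(N),\, \sigma^\theta)$ and $(\bar{P} = M\bar{N},\, \sigma^\vee)$. Part (ii) of Theorem~\ref{waldspurger tempered} then guarantees an element $s \in W(G)$ with $s\,\theta(M)\,s^{-1} = M$ and ${}^{s}(\sigma^\theta) \simeq \sigma^\vee$. Setting $x = s^{-1}$ (or absorbing the conjugation into a single Weyl element) and unwinding the definition ${}^{s}\tau(y) = \tau(s^{-1} y s)$, I would rewrite the conclusion in the stated form ${}^{x}\sigma \simeq (\sigma^\theta)^\vee$, using that the contragredient commutes with twisting by a Weyl element.

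\medskip

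The main obstacle I expect is bookkeeping rather than conceptual: matching the two realizations so that part (ii) applies verbatim, and then transposing the resulting identity into the exact shape ${}^{x}\sigma \simeq (\sigma^\theta)^\vee$. One must be careful that the Levi subgroups agree up to the \emph{same} conjugation that relates the inducing data, and that taking the contragredient inside versus outside the Weyl-twist is tracked consistently (i.e. that $({}^{s}\sigma^\theta)^\vee \simeq {}^{s}((\sigma^\theta)^\vee)$). A secondary point to verify is that $\sigma^\theta$ is genuinely a discrete series representation of $\theta(M)$ so that part (ii) is legitimately invoked; this follows since $\theta$ is an automorphism of $G$ and discrete series are preserved under automorphisms. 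Once the identifications are made carefully, the existence of $x$ is immediate from Waldspurger's uniqueness.
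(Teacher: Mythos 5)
Your proposal is correct and follows essentially the same route as the paper: both produce a second Waldspurger realization of the tempered representation by combining the twist by $\theta$ with the contragredient, and then invoke part (ii) of Theorem~\ref{waldspurger tempered} to extract the Weyl element. The only (immaterial) difference is that you match two realizations of $\pi^{\vee}$, whereas the paper dualizes once more, uses $\theta(P)=\bar{P}$ to return to $\ind_{P}^{G}\bigl((\sigma^{\theta})^{\vee}\bigr)$, and matches two realizations of $\pi$ itself.
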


\begin{proof} Since $\pi\hookrightarrow \ind_{P}^{G}\sigma$, we have $\pi^{\theta}\hookrightarrow \ind_{\theta(P)}^{G}(\sigma^{\theta})$. Since $\theta(P)=\bar{P}$, it follows that $\pi^{\theta}\hookrightarrow \ind_{\bar{P}}^{G}(\sigma^{\theta})$. Now taking dual, replacing $\bar{P}$ by $P$, and using $(\pi^{\theta})^{\vee}\simeq \pi$ we get $\pi\hookrightarrow \ind_{P}^{G}((\sigma^{\theta})^{\vee})$). Applying part (ii) of Waldspurger's theorem (see Theorem~\ref{waldspurger tempered} above), the result follows.
\end{proof}

Since $\sigma$ is a discrete series representation, it follows that $\pi$ occurs with multiplicity one in $\ind_{P}^{G}\sigma$ (see \cite{Gol}, \cite{Her}). As observed before, $\theta$ satisfies all the hypotheses needed in order to apply Casselman's pairing (see \S 3 of \cite{RocSpa}) and it follows that \begin{equation}\label{sign of pi and jacqeut module of pi}\varepsilon_{\theta}(\pi)=\varepsilon_{\theta}(\pi_{N}),\end{equation} where $\pi_{N}$ is the Jacquet module of $\pi$ with respect to $P$. \\

Let $\theta'(m)=\theta(x^{-1}mx)$. Since $^{x}\sigma\simeq (\sigma^{\theta})^{\vee}$ and $w_{0}^{2}=1$, it is clear that $\sigma^{\theta'}\simeq \sigma^{\vee}$. We first conclude that $x\not \in \{s_{\alpha}s_{\beta}, s_{\beta}s_{\alpha}\}$. Indeed, if $M\simeq \GL(1,F)\times \GL(1,F)$, and $x\in \{s_{\alpha}s_{\beta}, s_{\beta}s_{\alpha}\}$, then $\sigma$ is the trivial character and hence not a discrete series representation. In the case when $M\simeq \GL(2,F)$ or $M\simeq \SL(2,F)\times GL(1,F)$, it is easy to see that these elements don't normalize $M$. A simple computation shows that if $x\not \in \{s_{\alpha}s_{\beta}, s_{\beta}s_{\alpha}\}$ then $(xw_{0})^{2}=1$ and $\theta'= (int(x)\circ \theta)$. Indeed,
\begin{align*}
\theta'(m) &= \theta(x^{-1}mx)\\
&= w_{0}x^{-1}mxw_{0}^{-1}\\
&= xw_{0}mw_{0}^{-1}x^{-1}\\
&= x\theta(m)x^{-1}\\
&= (int(x)\circ \theta)(m).
\end{align*}

Since $\theta'= (int(x)\circ \theta)$ and $\theta$ is an involution, it is clear that $\theta'$ is an involution and $\sigma^{\theta}\simeq \sigma^{\theta'}$. It is easy to see that $\theta(x)x=1$ and it follows from 1.2.1 in \cite{RocSpa} that
\begin{equation}\label{sign of theta is sign if theta prime}
\varepsilon_{\theta'}(\sigma) = \varepsilon_{\theta}(\sigma)
\end{equation}

Since $\sigma^{\theta}\simeq \sigma^{\theta'}\simeq \sigma^{\vee}$ and $\sigma^{\theta}$ appears with multiplicity one in $\pi_{N}$, corollary in \S 3.5 in \cite{RocSpa}, applies and we have \begin{equation}\label{sign of sigma is sign of the jacquet module}\varepsilon_{\theta}(\pi_{N})=\varepsilon_{\theta}(\sigma).\end{equation}

In the case when $M\simeq \GL(2,F)$ or $M\simeq \GL(1,F)\times \GL(1,F)$, we know that $\theta$ acts as the involution $\theta(g)=(^{\top}g)^{-1}$ and it is clear that $\varepsilon_{\theta}(\sigma)=1$ (see \S 6 Theorem 1 in \cite{RocSpa}) and hence $\varepsilon(\pi)=1$.\\

If $M\simeq \SL(2,F)\times \GL(1,F)$, we have $\sigma\simeq \sigma_{1}\otimes \sigma_{2}$, where $\sigma_{1}$ and $\sigma_{2}$ are irreducible, Iwahori spherical, discrete series representations of $\SL(2,F)$ and $\GL(1,F)$ respectively. Since these representations are generic, it follows from the tempered case (considered earlier) that $\varepsilon_{\theta}(\sigma)=1$ and hence $\varepsilon(\pi)=1$.\\

\section*{Acknowledgements}
I would like to thank professors Dipendra Prasad and Alan Roche for their many valuable suggestions, help and encouragement throughout this project.
%
%

\bibliographystyle{amsplain}
\bibliography{ref}
\vspace{1 cm}
Department of Mathematics, \\
Indian Institute of Science Education and Research Bhopal, \\
Bhopal 462066.\\
\textit{E-mail address}: bkumar@iiserb.ac.in
\end{document}